\newcommand{\sat}{{\normalfont\text{sat}}}
\newcommand{\pd}{\normalfont\text{pd}}
\newtheorem{theorem}{Theorem}[section]
\newtheorem{headthm}{Theorem}
\newaliascnt{headcor}{headthm}
\newaliascnt{headconj}{headthm}
\newaliascnt{corollary}{theorem}
\newaliascnt{claim}{theorem}
\newaliascnt{lemma}{theorem}
\newtheorem{lemma}[lemma]{Lemma}
\newaliascnt{conjecture}{theorem}
\newaliascnt{proposition}{theorem}
\theoremstyle{definition}
\newaliascnt{definition}{theorem}
\newtheorem{definition}[definition]{Definition}
\newaliascnt{notation}{theorem}
\newaliascnt{example}{theorem}
\newtheorem{example}[example]{Example}
\newaliascnt{examples}{theorem}
\newaliascnt{remark}{theorem}
\newtheorem{remark}[remark]{Remark}
\newaliascnt{question}{theorem}
\newtheorem{question}[question]{Question}
\newaliascnt{questions}{theorem}
\newaliascnt{problem}{theorem}
\newaliascnt{construction}{theorem}
\newaliascnt{setup}{theorem}
\newaliascnt{algorithm}{theorem}
\newaliascnt{observation}{theorem}
\newaliascnt{defprop}{theorem}
\DeclareFontFamily{OT1}{pzc}{}
\DeclareFontShape{OT1}{pzc}{m}{it}{<-> s * [1.100] pzcmi7t}{}
\DeclareMathAlphabet{\mathchanc}{OT1}{pzc}{m}{it}
\def\equationautorefname~#1\null{(#1)\null}
\def\sectionautorefname~#1\null{Section #1\null}
\def\subsectionautorefname~#1\null{\S #1\null}
\newcommand{\vol}{\text{Vol}}
\newcommand{\spec}{\text{Spec}}
\newcommand{\ash}{\text{Assh}}
\newcommand{\con}{\text{Cone}}
\newcommand{\cal}{\mathcal}
\begin{document}

\title{Generalized Hilbert-Kunz multiplicity for families of ideals}

\author{Sudipta Das and Stephen Landsittel}
\address{(Das) Dr Homi Bhabha Rd, TIFR, Navy Nagar, Colaba, Mumbai, Maharashtra 400005}
\email{sudiptad@math.tifr.res.in}
\address
{(Landsittel) Institute of Mathematics, Hebrew University, Givat Ram, Jerusalem 91904, Israel}
\email{stephen.landsittel@mail.huji.ac.il}
\begin{abstract}
    In this paper, we initiate a systematic study of the generalized Hilbert–Kunz multiplicity for families  of ideals in a Noetherian local ring $(R,\mathfrak{m})$ of positive characteristic, and introduce a new asymptotic invariant called the \emph{Amao-type multiplicity}. We establish that, for a $p$-family of ideals, the generalized Hilbert–Kunz multiplicity arises as the limit of Amao-type multiplicities.
\end{abstract}

\maketitle

\section{Introduction}



In this article we consider an arbitrary $d$ dimensional local ring $(R,\mathfrak{m})$ in positive characteristic. Let $I \subset R$ be an $\mathfrak{m}$-primary ideal and $q=p^{e}$ for some $e \in \mathbb{N}$, and $I^{[q]}= \left( x^{q} \mid x \in I \right)$, the $q$-th Frobenius power of $I$. We denote the $\mathfrak{m}$-adic completion of $R$ by $\hat{R}$ and $\ell_{R}(-)$ denotes the length as an $R$-module. 

 Several important numerical invariants have been extensively studied in rings of positive characteristic. 
Among them, the \emph{Hilbert--Kunz} function plays a central role, as it captures the asymptotic behavior 
of the length $\ell_R(R/I^{[q]})$ as a function of $q$ and provides a refined measure of the singularities of the ring. In his work, Kunz \cite{K76} introduced this as a way to measure how close the ring $R$ is to being regular. Later P. Monsky \cite{M83} showed that \\
\begin{equation*}
\label{Equation 1.1}
    e_{HK}(I)=\lim_{e \to \infty} \dfrac{\ell_{R}(R/I^{[q]})}{q^{d}} 
\end{equation*} exists for any $\mathfrak{m}$-primary ideal $I$, this positive real number is called the \textit{Hilbert-Kunz Multiplicity} of $I$. 
This definition can naturally be extended to finitely generated modules as follows. Let $F_{*}^{e}R$ denote an $R$
algebra obtained by the $e$th iterate of the Frobenius endomorphism, i.e., $F_{*}^{e}R$ is abstractly
isomorphic to $R$ as a ring and $r.s =r^{p^e}.s$ for any elements $r \in R$ and  $ s \in F_{*}^{e}R$. Then for a finitely generated $R$-module $M$, we study the function $ f_{gHK}^{M}(e)= \ell_{F_{*}^{e}R}
\left(H_{\mathfrak{m}}^0(M \otimes_{R} F_{*}^{e}R)\right)$ and the limit $$e_{gHK}(M)= \displaystyle \lim_{e \to \infty} \dfrac{f_{gHK}^{M}(e)}{p^{ed}}$$ provided it exists. We refer to this quantity as the \emph{generalized Hilbert--Kunz function}, and its limiting value, when it exists, as the \emph{generalized Hilbert--Kunz multiplicity} of \( M \).
Throughout this paper, we focus on the case where \( M = R/I \) for an arbitrary ideal $I$ and we call it $e_{gHK}(I)$. Dao and Smirnov prove that this limit  exists when $R$ is a complete intersection with an isolated singularity in \cite{DS}. Jeffries and Hernández \cite{HJ} show that $e_{gHK}(I)$ exists for any ideal $I$ satisfying $\mathfrak{m}^{cq} \cap (I^{[q]})^{\sat}
= \mathfrak{m}^{cq} \cap I^{[q]}$ provided the ring is either an analytically irreducible local
ring, or a graded domain. Vraciu has shown in \cite{V1} that if every ideal $I$ of
$R$ satisfies a notion of interest in tight closure theory called (LC) condition then  $e_{gHK} (I)$ exists, where (LC) says there exists a positive integer $c$
such that $\mathfrak{m}^{cq} (I^{[q]})^{\sat} \subset I^{[q]}$ for all $q$. 

In Section \ref{sec2}, we use the description of  Epstien and Yao in \cite{EY} to define the  generalized Hilbert-Kunz multiplicity of the ideal $I$. This multiplicity is an useful numerical invariant for determining homological properties. For example if $(R,\mathfrak{m})$ is a complete intersection, then $e_{gHK}(M) = 0$ if and only if $ \pd (M) < \dim R$ ( \cite{DS}). In \cite{BrennerCaminata} Brenner and Caminata showed that when $R$ is a two-dimensional normal standard graded ring over an algebraically closed field of prime characteristic, generalized Hilbert–Kunz multiplicity of $R/I$ can be computed entirely using numerical invariants coming from the geometry of the syzygy bundle associated with the generators of $I$. In particular, this multiplicity depends on the degrees of the generators of $I$, the degree of the corresponding ideal sheaf, and the data from the strong Harder–Narasimhan filtration of that syzygy bundle. This generalized multiplicity and related analogues results are also utilized by Brenner \cite{Brenner}  in his proof of the existence of
irrational Hilbert-Kunz multiplicities

Motivated by this various interesting aspect of generalized Hilbert-Kunz multiplicity, in this paper, we have initiated the study of generalized Hilbert-Kunz multiplcity for $p$-families $\{I_q\}$, and give conditions for when the sequence $\ell(H^0_m(R/I_q))/q^d$ has a limit, where a $p$-family of ideals is a sequence of ideals $ I_{\bullet}=\{I_{q}\}_{q=1}^{\infty}$ with $I_{q}^{[p]} \subseteq I_{pq}$ for all $q$. We call this limit (when it exists) the generalized Hilbert-Kunz multiplicity of the $p$-family $\{I_q\}$, denoted by $e_{gHK}(I_{\bullet})$. Clearly for an $\mathfrak{m}$-primary ideal $I\subset R$, with $\{I_q=I^{[q]}\}$ we recover the usual Hilbert-Kunz multiplicity of $I$. \\
 The second multiplicity we introduce is $ a_F(I,J) = \displaystyle \limsup_{q \to \infty} \dfrac{\ell\left(J^{[q]}/I^{[q]}\right)}{q^d}$ where $J$ and $I$ are two ideals in $R$ such that $J/I$ has finite length, and we call this Amao-type multiplicity (see Definition \ref{df-at}). This is the Frobenius analogue of the Amao multicity, introduced in the paper of Amao \cite{Amao}. Note that the Amao multiplicity can be calculated from the leading coeffiecient of a polynomial, while the Amao-type multiplicity exhibits more complex behavior. Additionally, we don't know whether such a limit always exists.
 
 Throughout this paper we assume linear growth type condition (see Definition \ref{p(c)}) for $p$-families of ideals which has been discussed in \cite{HJ}. Under these conditions, we present the main result of this paper. We demonstrate that the generalized Hilbert-Kunz multiplicity of a $p$-family $\{I_q\}$ is equal to the asymptotic limit of the Amao-type multiplicities $a_F(I_q, (I_q)^{\sat})$. A special case of this result for a $p$-family of $\mathfrak{m}$-primary ideals has been done in \cite{D1,DC}.
 \phantom{}\\\\
 
\begin{headthm}(\autoref{thm6-1}, \autoref{thm6-2}) \label{Main_Theorem}
    Let $(R,m)$ be a $d$-dimensional regular local ring of prime characteristic $p>0$. Let $I_{\bullet} = \{I_q\}$ be a $p$-family in $R$ for which there is $c>0$ such that $I_q\cap m^{cq} = I_q^{\sat}\cap m^{cq}$ for all $q$. Suppose that for $P\in\ash(R)$ such that $P\supset I_1$, we have $P\supset I_q$ for all $q$. Then
    \begin{enumerate}
        \item[$(i)$]For all $q'$, the limit
        \begin{equation*}
            a_F(I_{q'}, (I_{q'})^{\sat}) = \lim_{q\to\infty}\frac{\ell_R([(I_{q'})^{\sat}]^{[q]}/(I_{q'}^{[q]}))}{q^d}
        \end{equation*}exists.
        \item[$(ii)$] The limit
        \begin{equation*}
            \lim_{q'\to\infty}\frac{a_F(I_{q'}, (I_{q'})^{\sat})}{(q')^d}
        \end{equation*}exists.
        \item[$(iii)$] We have the following formula
        \begin{equation*}
            e_{_gHK}(I_{\bullet})
            = \lim_{q'\to\infty}\frac{a_F(I_{q'}, (I_{q'})^{\sat})}{(q')^d}.
        \end{equation*}
    \end{enumerate}

    Moreover, let $(R, \mathfrak{m})$ is a $d$-dimensional analytically unramified local ring and $I \subset R$ be an ideal. Suppose the associated $p$-family $\{I^{[q]}\}$ satisfies for some $c > 0$, $I^{[q]}\cap m^{cq} = (I^{[q]})^{\sat}\cap m^{cq}$ for all $q$. Then the generalized Hilbert–Kunz multiplicity of $I$ is given by \begin{equation*}
            e_{_gHK}(I)
            = \lim_{q'\to\infty}\frac{a_F(I^{[q']}, (I^{[q']})^{\sat})}{(q')^d}.
        \end{equation*}
\end{headthm}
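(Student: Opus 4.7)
The plan is to exploit flatness of Frobenius in the regular local ring $R$ (Kunz) to turn (i) into a length computation, and then reduce (ii)--(iii) to existence of limits for $p$-families of $\mathfrak{m}$-primary ideals.

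For (i), the linear-growth hypothesis gives $\mathfrak{m}^{cq'}\cdot I_{q'}^{\sat}\subseteq I_{q'}^{\sat}\cap\mathfrak{m}^{cq'}=I_{q'}\cap\mathfrak{m}^{cq'}\subseteq I_{q'}$, so $I_{q'}^{\sat}/I_{q'}=H^{0}_{\mathfrak{m}}(R/I_{q'})$ is a finite length $R$-module. Since $F_{*}^{e}R$ is $R$-flat, the Frobenius functor $F^{e*}$ applied to the short exact sequence $0\to I_{q'}^{\sat}/I_{q'}\to R/I_{q'}\to R/I_{q'}^{\sat}\to 0$ identifies $F^{e*}(I_{q'}^{\sat}/I_{q'})$ with $(I_{q'}^{\sat})^{[q]}/I_{q'}^{[q]}$. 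A d\'evissage along the $\mathfrak{m}$-adic filtration of the finite length module $I_{q'}^{\sat}/I_{q'}$, combined with the identity $\ell_{R}(R/\mathfrak{m}^{[q]})=q^{d}$ valid in any regular local ring of dimension $d$, yields
\[
\ell_{R}\bigl((I_{q'}^{\sat})^{[q]}/I_{q'}^{[q]}\bigr)=q^{d}\,\ell_{R}\bigl(I_{q'}^{\sat}/I_{q'}\bigr).
\]
Hence the sequence defining $a_{F}(I_{q'},I_{q'}^{\sat})$ is constant in $q$, the limit exists, and equals $\ell_{R}(H^{0}_{\mathfrak{m}}(R/I_{q'}))$.

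For (ii) and (iii), I rewrite $\ell_{R}(I_{q}^{\sat}/I_{q})$ as a difference of bounded $p$-family lengths. The linear growth forces $I_{q}^{\sat}\cap(I_{q}+\mathfrak{m}^{cq})=I_{q}$ (if $x=a+b\in I_q^{\sat}$ with $a\in I_q$ and $b\in\mathfrak{m}^{cq}$, then $b\in I_q^{\sat}\cap\mathfrak{m}^{cq}=I_q\cap\mathfrak{m}^{cq}\subseteq I_q$), so $(I_{q}^{\sat}+\mathfrak{m}^{cq})/(I_{q}+\mathfrak{m}^{cq})\cong I_{q}^{\sat}/I_{q}$ and
\[
\ell_{R}(I_{q}^{\sat}/I_{q})=\ell_{R}\bigl(R/(I_{q}+\mathfrak{m}^{cq})\bigr)-\ell_{R}\bigl(R/(I_{q}^{\sat}+\mathfrak{m}^{cq})\bigr).
\]
Both $\{I_{q}+\mathfrak{m}^{cq}\}$ and $\{I_{q}^{\sat}+\mathfrak{m}^{cq}\}$ are $p$-families of $\mathfrak{m}$-primary ideals (the second uses $(J^{\sat})^{[p]}\subseteq (J^{[p]})^{\sat}$), each containing $\mathfrak{m}^{cq}$. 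The existence theorem for limits of bounded $p$-families of $\mathfrak{m}$-primary ideals in an analytically unramified local ring (Cutkosky, Das--Cutkosky) gives both limits on the right, so (ii) follows by subtraction. Part (iii) is then immediate, since $e_{gHK}(I_{\bullet})=\lim_{q}\ell_{R}(H^{0}_{\mathfrak{m}}(R/I_{q}))/q^{d}$ by definition, and by (i) this agrees with $\lim_{q'}a_{F}(I_{q'},I_{q'}^{\sat})/(q')^{d}$.

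For the moreover statement, flatness of Frobenius fails in the analytically unramified setting, so (i)'s clean identity must be replaced by an asymptotic statement. The plan is to pass to the $\mathfrak{m}$-adic completion $\hat{R}$ (which is reduced by hypothesis), decompose via its minimal primes, and invoke the existing Hern\'andez--Jeffries existence of $e_{gHK}$ on each analytically irreducible factor $\hat{R}/P_{i}$. One then compares $((I^{[q']})^{\sat})^{[q]}$ with $(I^{[qq']})^{\sat}$: the containment $((I^{[q']})^{\sat})^{[q]}\subseteq (I^{[qq']})^{\sat}$ is automatic, and LC annihilates the quotient by $\mathfrak{m}^{cqq'}$; additivity of length along $\text{Assh}(\hat{R})$ combined with Hern\'andez--Jeffries should control this defect and recover the formula for $R$. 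The main obstacle is precisely this last step: in the non-regular case $a_{F}(I^{[q']},(I^{[q']})^{\sat})$ is no longer constant in $q$, and one must show $\ell_{R}\bigl((I^{[qq']})^{\sat}/((I^{[q']})^{\sat})^{[q]}\bigr)/(qq')^{d}\to 0$, which requires the LC hypothesis, the Hern\'andez--Jeffries existence on each analytically irreducible component, and an additivity argument to be woven together carefully.
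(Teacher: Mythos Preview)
Your treatment of the regular case is correct and takes a genuinely different, more elementary route than the paper. The paper derives both halves of Theorem~A from its general technical result (Theorem~B, the general volume formula), proved via OK-valuations and $p$-bodies in the style of Hern\'andez--Jeffries; regularity enters only to verify the intersection hypothesis $(I_{q'}^{\sat})^{[q]}\cap m^{c_1qq'}=I_{q'}^{[q]}\cap m^{c_1qq'}$ of that theorem, using that Frobenius distributes over intersections in a regular ring. You instead use Kunz's theorem to compute $a_F(I_{q'},I_{q'}^{\sat})=\ell_R(I_{q'}^{\sat}/I_{q'})$ exactly, which collapses (i)--(iii) to the single assertion that $\lim_q\ell_R(I_q^{\sat}/I_q)/q^d$ exists; your difference-of-two-BBL-$p$-families decomposition then recovers this from known existence results for $m$-primary $p$-families. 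This is shorter and bypasses the valuation machinery entirely in the regular case, at the cost of not extending to the non-regular setting, whereas the paper's machinery handles both cases uniformly.

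For the ``moreover'' statement your proposal is a sketch, not a proof, and you correctly flag the obstacle. Without flat Frobenius the identity $a_F=\ell_R$ fails, and one must genuinely compare the iterated limit $\lim_{q'}(q')^{-d}\lim_q$ with the single limit defining $e_{gHK}(I)$. Your proposed route---show $\ell_R\bigl((I^{[qq']})^{\sat}/((I^{[q']})^{\sat})^{[q]}\bigr)/(qq')^d\to 0$ via additivity over $\ash(\hat R)$ and Hern\'andez--Jeffries on each component---does not close as written: even after using the $p(c)$ hypothesis to rewrite this defect as a difference of BBL $p$-family colengths, nothing in your outline explains why $\lim_q\ell_R\bigl(R/([(I^{[q']})^{\sat}]^{[q]}+m^{cqq'})\bigr)/q^d$, divided by $(q')^d$, converges as $q'\to\infty$ to the corresponding limit for the single family $\{(I^{[Q]})^{\sat}+m^{cQ}\}_Q$. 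This interchange of limits is exactly what the paper's Theorem~B supplies: after checking (via an elementary lemma) that $[(I^{[q']})^{\sat}]^{[q]}\cap m^{cqq'}=I^{[qq']}\cap m^{cqq'}$, one reduces to a complete domain, expresses both sides as volumes of $p$-bodies, and invokes an approximation result for $p$-systems (quoted from \cite{D1}) to force the double and single limits to agree. That approximation step is the real content here, and your sketch does not provide a substitute for it.
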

\phantom{}\\
The next technical result lies at the core of this paper. It captures the essential mechanism driving our main result
and provides the critical bridge connecting the behavior of $p$-families
to the asymptotic formula established in the main theorem. It ensures the existence of the relevant limits and ultimately connects the generalized Hilbert–Kunz multiplicity with the Amao-type multiplicities.

\begin{headthm} (\autoref{generalmult}) \label{Main_technical_result}
Suppose that $R$ is an analytically unramified local ring of dimension $d$. Let that $I_{\bullet} = \{I_q\}\subset J_{\bullet} = \{J_q\}$ be $p$-families. Suppose further that, for each $q' = p^e$, there exists $p$-families $I_{q',\bullet} = \{I_{q',q}\}\subset J_{q',\bullet} = \{J_{q',q}\}$ such that $J_{q',1} = J_{q'}$, $J_{q',q}\subset J_{qq'}$, $I_{q',1} = I_{q'}$, and $I_{q',q}\subset I_{qq'}$ for all $q$. 
Suppose that there is a positive number $c$ such that
\begin{equation*}
    I_{q',q}\cap m_R^{cqq'} = J_{q',q}\cap m_R^{cqq'}
\end{equation*}for all $q,q'$. Suppose further that whenever $P\in\ash(R)$ such that $P\supset I_1$, we have $P\supset I_q$ for all $q$. Let
\begin{equation*}
    \cal{F}(I_{\bullet},J_{\bullet}):= \lim_{q\to\infty}\frac{\ell_R(J_q/I_q)}{q^d}.
\end{equation*}
Then the above limit exists. Moreover,
\begin{equation*}
    \cal{G}(J_{q',\bullet}, I_{q',\bullet}):=\lim_{q\to\infty}
    \frac{\ell_R(J_{q',q}/I_{q',q})}{q^d}
\end{equation*}exists for all $q'$ and the limit
\begin{equation*}
    \lim_{q'\to\infty}\frac{\cal{G}(J_{q',\bullet}, I_{q',\bullet})}{(q')^d}
\end{equation*}exists, and we have the following formula
\begin{equation}
    \cal{F}(I_{\bullet},J_{\bullet})
    =\lim_{q'\to\infty}\frac{\cal{G}(J_{q',\bullet}, I_{q',\bullet})}{(q')^d}.
\end{equation}
\end{headthm}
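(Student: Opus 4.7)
The plan is to reduce to an assertion about volumes of $\mathfrak{m}$-primary $p$-families via the linear-growth hypothesis, invoke existing Cutkosky-type existence theorems, and then obtain the asymptotic identity by sandwiching.

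First, specializing the hypothesis $I_{q',q}\cap\mathfrak{m}^{cqq'}=J_{q',q}\cap\mathfrak{m}^{cqq'}$ at $q=1$ (using $I_{q',1}=I_{q'}$, $J_{q',1}=J_{q'}$) yields $I_{q'}\cap\mathfrak{m}^{cq'}=J_{q'}\cap\mathfrak{m}^{cq'}$, i.e.\ the ambient pair $(I_\bullet,J_\bullet)$ inherits its own linear-growth condition. I would then introduce the $\mathfrak{m}$-primary $p$-families $\tilde I_q:=I_q+\mathfrak{m}^{cq}$, $\tilde J_q:=J_q+\mathfrak{m}^{cq}$, and, for each $q'$, $\tilde I_{q',q}:=I_{q',q}+\mathfrak{m}^{cqq'}$, $\tilde J_{q',q}:=J_{q',q}+\mathfrak{m}^{cqq'}$. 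A second-isomorphism-theorem argument combined with the linear growth produces
\[
\ell_R(J_q/I_q)=\ell_R(R/\tilde I_q)-\ell_R(R/\tilde J_q),
\]
and the analogous identity in the primed case. Setting $\vol(A_\bullet):=\lim_q \ell_R(R/A_q)/q^d$, the whole problem thus collapses to studying these volumes of $\mathfrak{m}$-primary $p$-families.

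Existence of $\vol(\tilde I_\bullet)$, $\vol(\tilde J_\bullet)$, $\vol(\tilde I_{q',\bullet})$, $\vol(\tilde J_{q',\bullet})$ now follows from the Cutkosky--Hern\'andez--Jeffries existence theorem for $\mathfrak{m}$-primary $p$-families over analytically unramified local rings. The $\ash$ hypothesis is used here in the standard reduction to complete local domains: any $P\in\ash(R)$ with $P\supset I_1$ contains every $I_q$, so modulo each minimal prime the family becomes either trivial or nontrivial uniformly, and the total volume decomposes as a finite sum over the minimal primes. This simultaneously supplies the existence of $\mathcal F=\vol(\tilde I_\bullet)-\vol(\tilde J_\bullet)$ and of $\mathcal G(J_{q',\bullet},I_{q',\bullet})=\vol(\tilde I_{q',\bullet})-\vol(\tilde J_{q',\bullet})$ for each $q'$.

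It remains to prove $\vol(\tilde A_{q',\bullet})/(q')^d\to\vol(\tilde A_\bullet)$ separately for $A\in\{I,J\}$; subtracting the two versions then yields the identity $\mathcal F=\lim_{q'}\mathcal G/(q')^d$. The lower bound $\vol(\tilde A_{q',\bullet})/(q')^d\geq\vol(\tilde A_\bullet)$ is immediate from the inclusion $\tilde A_{q',q}\subset\tilde A_{qq'}$ after reindexing $q\mapsto qq'$. For the matching upper bound, the $p$-family property applied to $\tilde A_{q',\bullet}$ gives $(\tilde A_{q'})^{[q]}\subset\tilde A_{q',q}$, so $\vol(\tilde A_{q',\bullet})\leq e_{HK}(\tilde A_{q'})$, and the claim reduces to the asymptotic equivalence
\[
\lim_{q'\to\infty}\frac{e_{HK}(\tilde A_{q'})}{(q')^d}=\vol(\tilde A_\bullet),
\]
a standard consequence of the Okounkov-body description of volumes of $p$-families of $\mathfrak{m}$-primary ideals.

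The main obstacle I anticipate is the last convergence: one needs it not merely as an existence statement but with enough uniformity in $q'$ to pass to differences for $I$ and $J$ simultaneously. This is where one really leans on the Okounkov-body machinery---enabled here by the analytic unramifiedness and the $\ash$ hypothesis---to represent both $e_{HK}(\tilde A_{q'})$ and $\vol(\tilde A_\bullet)$ as volumes of truncations of a common convex cone and to invoke monotone convergence.
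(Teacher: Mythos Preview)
Your argument is correct and takes a genuinely different route from the paper. The paper never passes to the $\mathfrak m$-primary families $\tilde I_q=I_q+\mathfrak m^{cq}$, $\tilde J_q=J_q+\mathfrak m^{cq}$; instead it reduces to a complete domain, fixes an OK valuation $\nu$, and writes $\ell_R(J_q/I_q)$ directly as a difference of lattice-point counts $\sum_h\#(\nu^{(h)}(J_q)\cap qH)-\sum_h\#(\nu^{(h)}(I_q)\cap qH)$. The key step there is the $\epsilon$-approximation lemma (their Theorem~\ref{D1thm3.18}) applied separately to the $p$-systems $\nu(J_\bullet)$ and $\nu(I_\bullet)$, after which the non-domain case is handled by an explicit filtration over minimal primes. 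In effect the paper reproves the volume $=$ multiplicity formula of \cite{D1} inside this more general framework, whereas you reduce to that formula as a black box via the clean identity $\ell_R(J_q/I_q)=\ell_R(R/\tilde I_q)-\ell_R(R/\tilde J_q)$ and a sandwich between $\vol(\tilde A_\bullet)$ and $e_{HK}(\tilde A_{q'})/(q')^d$.

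Two remarks. First, your worry about ``uniformity in $q'$'' is unfounded: your sandwich gives honest limits $\lim_{q'}\vol(\tilde I_{q',\bullet})/(q')^d=\vol(\tilde I_\bullet)$ and likewise for $J$, and the difference of two convergent sequences converges---no extra uniformity is needed. Second, your route shows that the $\ash$ hypothesis is in fact superfluous: once you pass to the $\mathfrak m$-primary families $\tilde A_\bullet$, none of the cited existence/volume results require it, so your invocation of it ``in the standard reduction'' is harmless but unnecessary. The paper's direct valuation-theoretic argument genuinely uses that hypothesis (to handle the case where $I_1$ falls into a top-dimensional minimal prime after reduction), so your approach is in this respect slightly sharper, while the paper's is more self-contained.
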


In this paper, we have also established an alternative approach to proving the limit existence of the asymptotic colength of any BBL weakly $p$-family of $\mathfrak{m}$-primary ideals (see Definition \ref{df-weakp}), a result previously shown in \cite{DC,Tucker}. The aforementioned technical theorem is also instrumental in establishing the Volume = Multiplicity formula for this class of families.

\begin{headthm} (\autoref{thm7-1}) \label{Minkowski}
    Let $(R,\mathfrak{m})$ be a $d$-dimensional Noetherian local ring of characteristic $p>0$. Let $I_{\bullet} = \{I_q\}$ be a BBL weakly $p$-family of $\mathfrak{m}$-primary ideals in $R$. Suppose that the dimension of the nilradical of $\widehat{R}$ is less than $d$. Then we have the following.
    
    \begin{enumerate}
    \item[$(1)$]
        $\lim_{q\to\infty}\frac{\ell_R(R/I_q)}{q^d}$
    exists.
    
    \item[$(2)$]
    $\lim_{q\to\infty}\frac{\ell_R(R/I_q)}{q^d} = \lim_{q\to\infty}\frac{e_{HK}(I_q)}{q^d}$.
    \end{enumerate}
\end{headthm}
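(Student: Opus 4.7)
The plan is to apply \autoref{generalmult} with the constant family $J_\bullet = \{R\}$ together with auxiliary $p$-families $J_{q',q} = R$ and $I_{q',q} = I_{q'}^{[q]}$. With these choices, $\cal{F}(I_\bullet,J_\bullet) = \lim_q \ell_R(R/I_q)/q^d$ while $\cal{G}(J_{q',\bullet},I_{q',\bullet}) = \lim_q \ell_R(R/I_{q'}^{[q]})/q^d = e_{HK}(I_{q'})$ by Monsky's theorem. A direct application of \autoref{generalmult} would therefore simultaneously yield the existence of the limit in part (1) and the equality in part (2), via $\cal{F}(I_\bullet,J_\bullet) = \lim_{q'} e_{HK}(I_{q'})/(q')^d$.

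The first obstacle is that \autoref{generalmult} requires $R$ to be analytically unramified, whereas our hypothesis only says $\dim\mathrm{nil}(\widehat R) < d$. I would begin by completing (which is harmless for colengths of $\mathfrak{m}$-primary ideals) and setting $N = \mathrm{nil}(\widehat R)$. Using the nilpotent filtration $\widehat R \supset N \supset N^2 \supset \cdots \supset N^k = 0$, each quotient $N^i/N^{i+1}$ is a finitely generated $\widehat R/N$-module of dimension at most $\dim N < d$. The BBL growth condition provides a uniform constant $c_0$ with $\mathfrak{m}^{c_0 q}\subset I_q$ for all $q$, so the Hilbert--Samuel bound on lower-dimensional modules gives $\ell(N^i/(N^i\cap I_q\widehat R)) \leq \ell(N^i/\mathfrak{m}^{c_0 q}N^i) = O(q^{d-1})$. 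Summing over the filtration yields $\ell_R(R/I_q) = \ell((\widehat R/N)/I_q(\widehat R/N)) + o(q^d)$. For the Hilbert--Kunz side, additivity of $e_{HK}$ along $0\to N\to \widehat R\to \widehat R/N\to 0$ together with $e_{HK}(I_q, N) = 0$ (since $\dim N < d$) gives $e_{HK}(I_q) = e_{HK}(I_q; \widehat R/N)$ exactly. This reduces both (1) and (2) for $R$ to the corresponding statements for the analytically unramified ring $\widehat R/N$.

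With this reduction made, I would verify the hypotheses of \autoref{generalmult}. The chain conditions $J_{q',1} = J_{q'}$, $J_{q',q}\subset J_{qq'}$, $I_{q',1} = I_{q'}$, and $I_{q',q} = I_{q'}^{[q]}\subset I_{qq'}$ follow by iterating the BBL (weakly) $p$-family condition (restricting to the subsequence $q = p^e$ if necessary to view things as a $p$-family in the sense of this paper). The $\ash(R)$ hypothesis is vacuous since $I_1$ is $\mathfrak{m}$-primary: the only prime containing it is $\mathfrak{m}$, which does not lie in $\ash(R)$ for $d > 0$ (the $d = 0$ case being trivial). The saturation condition $\mathfrak{m}^{cqq'}\subset I_{q'}^{[q]}$ follows from the uniform inclusion $\mathfrak{m}^{c_0 q'}\subset I_{q'}$ by a pigeonhole argument: if $\mathfrak{m}$ is generated by $n$ elements $x_1,\ldots,x_n$, then any monomial in $\mathfrak{m}^{nc_0 q'q}$ contains some $x_i$ raised to a power $\geq c_0 q'q$, hence is a multiple of $(x_i^{c_0 q'})^q \in (\mathfrak{m}^{c_0 q'})^{[q]}\subset I_{q'}^{[q]}$. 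Thus $\mathfrak{m}^{nc_0 q'q}\subset I_{q'}^{[q]}$, which is the required condition with $c = nc_0$.

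Once the hypotheses are verified, \autoref{generalmult} applies and directly yields both (1) and (2). The step I expect to be most delicate is the reduction in the second paragraph: establishing the uniform $o(q^d)$ bound along the nilradical filtration using only the BBL hypothesis (without the full strength of analytic unramifiedness of $R$), and handling the additivity of $e_{HK}$ across that filtration cleanly under the $(q')^d$-normalization. After that, the rest is bookkeeping: checking that the natural auxiliary families $I_{q',q} = I_{q'}^{[q]}$ and $J_{q',q} = R$ sit inside the framework of \autoref{generalmult}, which is essentially automatic from the $p$-family structure and the $\mathfrak{m}$-primary hypothesis.
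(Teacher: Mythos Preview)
Your overall architecture is the same as the paper's---reduce to a setting where \autoref{generalmult} applies, then read off both assertions from $\cal{F}=\lim_{q'}\cal{G}/(q')^d$ with $\cal{G}=e_{HK}(I_{q'})$---but there is a genuine gap in your verification of the hypotheses. A \emph{weakly} $p$-family only satisfies $cI_q^{[p]}\subset I_{pq}$ for some fixed $c\in R^\circ$, not $I_q^{[p]}\subset I_{pq}$. Consequently $I_\bullet$ need not be a $p$-family at all, and iterating the weak condition gives only $c^{1+p+\cdots+p^{e-1}}I_{q'}^{[p^e]}\subset I_{p^eq'}$, not the containment $I_{q'}^{[q]}\subset I_{qq'}$ that \autoref{generalmult} requires. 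Your parenthetical ``restricting to the subsequence $q=p^e$ if necessary'' does not repair this: the family is already indexed by powers of $p$, and no subsequence makes the missing factor of $c$ disappear.

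The paper's fix is exactly to absorb this factor. After reducing to a complete local domain (so that $c\ne 0$ is a nonzerodivisor and $\ell(R/I_q)=\ell(cR/cI_q)$), one replaces $I_\bullet,J_\bullet$ by the genuine $p$-families $I'_\bullet=\{cI_q\}$, $J'_\bullet=\{cR\}$, and takes auxiliary families $I'_{q',q}=c^qI_{q'}^{[q]}$, $J'_{q',q}=c^qR$. These satisfy $I'_{q',q}\subset I'_{qq'}$ and $J'_{q',q}\subset J'_{qq'}$ on the nose, and the required intersection condition $c^qR\cap\mathfrak{m}^{k_0qq'}\subset c^qI_{q'}^{[q]}$ is obtained by combining Artin--Rees (to peel off $c^q$) with the pigeonhole argument you already sketched. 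The colengths are unchanged since multiplication by $c^q$ is injective, so $\cal{G}$ still computes $e_{HK}(I_{q'})$. Your reduction to the analytically unramified case via the nilradical filtration is plausible and somewhat more self-contained than the paper's citation of \cite{DC}, but note that you ultimately need a \emph{domain} (not just reduced) for the ``multiply by $c$'' trick to preserve lengths; the paper goes all the way to a complete local domain for this reason.
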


\section{Preliminaries}\label{sec3}

In this section, we introduce the essential definitions and notations that will be required in the subsequent sections.
\subsection{Cones in $\mathbb{R}^d$}

We begin by introducing some notation from analysis and convex geometry and which will be used later in this Section.

For a Lebesgue measurable subset $E\subset \mathbb{R}^d$, we will denote its Lebesgue measure by $\vol_{\mathbb{R}^d}(E)$. If $A,B$ are two subsets of $\mathbb{R}^d$, then we define the \textit{Minkowski} sum of $A$ and $B$ by $\{a+b\mid a\in A\text{, }b\in B\}$, which is denoted by $A+B$. If $E$ is a subset of $\mathbb{R}^d$, we denote it's closed convex cone by $\con(E)$, which is the closure of the set of all linear combinations $\displaystyle\sum_{i=1}^n\lambda_ix_i$, $\lambda_i\in \mathbb{R}_{\geqslant 0}$, $x_i\in E$, $n\geqslant \mathbb{Z}_{\geqslant 1}$. For $b,a\in\mathbb{R}^d$, let $(b,a)$ be their inner product. If there exists $a\in\mathbb{R}^d$ such that $(x,a)>0$ for all $x\in \con(E) \setminus\{\textbf{0}\}$ then we say that the cone $\con(E)$ is \textit{pointed}. In this case we will say that $C = \con(E)$ is \textit{pointed at $a$}.

If $C\subset \mathbb{R}^d$ is a cone pointed at a vector $a\in\mathbb{R}^d$, and $\beta\geqslant 0$, then we call
\begin{equation*}
    H_{\beta} := \{x\in\mathbb{R}^d\mid (x,a)<\beta\}
\end{equation*}a \textit{truncating half-space} for $C$.

This truncating half-space is a useful concept in describing multiplicities associated to ideals and $p$-families, as we will see later in this section. In particular we will calculate multiplicities by taking volumes of suitable cones intersected with a truncating half space.

\subsection{$p$-systems and $p$-families}\label{sec4}

Now we define families of ideals called $p$-families. We then define sets called $p$-systems which will be useful in measuring the generalized Hilbert-Kunz multiplicities of $p$-families.

\begin{definition}
    Let $R$ be a ring of prime characteristic $p>0$. A sequence of ideals $I_q$ indexed by $q\in \{p^e\mid e\in\mathbb{N}\}$ is called a \textit{$p$-family} if $I_q^{[p]}\subset I_{pq}$ for all $q$.
\end{definition}

For instance, the Frobenius powers $\{I^{[q]}\}$ of an ideal $I$ in $R$ form a $p$-family. Now we define $p$-systems.

\begin{definition}
    Let $S$ be a semigroup and let $p>0$ be a prime number. A collection $T_{\bullet} = \{T_q\}_{q\in\{p^e\mid e\in\mathbb{N}\}}$ of subsets $T_q$ of $S$ is called a $p$-\textit{system} if $T_q + S\subset T_q$ and $pT_q\subset T_{pq}$ for all $q$. If $T_{\bullet} = \{T_q\}$ is a $p$-system, we associate its \textit{$p$-body}, which is the set
    \begin{equation*}
        \Delta(S,T_{\bullet}): = \bigcup_{q=1}^{\infty}\Big(\frac{1}{q}T_q + \con(S)\Big).
    \end{equation*}
\end{definition}

We will see in Theorem \ref{cone1} that certain limits, namely multiplicities as we will show, can be written as the volume of a truncated $p$-body, which is a $p$-body intersected with any truncating half-space.

\subsection{$OK$-Domain}\label{sec5}

We review the construction of $OK$-valuations, $OK$-domains following work of Cutkosky, Jeffries, and Hernández. First we define $a$-valuations for a vector $a\in\mathbb{R}^d$.

\begin{definition}(Remark 2.4 \cite{D1})
    Suppose that $i:\mathbb{Z}^d\to \mathbb{R}$ is an injective group homomorphism. Then there exists a unique $a\in\mathbb{R}^d$ whose coordinates are rationally independent having the property that $i(x) = (x,a)$ for all $x\in\mathbb{Z}^d$. We say that $x\leqslant_a y$ if $x,y\in\mathbb{Z}^d$ and $(x,a)\leqslant (y,a)$.

    Let $K$ be a field and let $K^{\times} = K\setminus\{0\}$. Let $i:\mathbb{Z}^d\to \mathbb{R}$ be an embedding given by a vector $a\in\mathbb{R}^d$. An $a$-\textit{valuation} on $K$ is a surjective group homomorphism $\nu:K^{\times}\to \mathbb{Z}^d$ such that
    \begin{equation*}
        \nu(x+y)\geqslant_a \min\{\nu(x),\nu(y)\}
    \end{equation*}for $x,y\in K^{\times}$ (we formally set $\nu(0):=\infty$). For $M\subset K$, we write $\nu(M) := \nu(M\setminus \{0\})\subset \mathbb{Z}^d$, which is called the \textit{image of $M$} under $\nu$. For $u\in \mathbb{Z}^d$ we define $K_u:= \{x\in K^{\times} \mid \nu(x)\geqslant_a u\}$. The ideals $K_u\cap R$ are called \textit{valuation ideals}.
\end{definition}

Now we describe what it means for a valuation ring to strongly dominate a local domain.

\begin{definition}(Definition 2.7 \cite{D1})
Let $(R,m,k)$ be a $d$-dimensional local domain with fraction field $K$, let $\textbf{a}\in\mathbb{R}^d$, and let $\nu:K^{\times}\to \mathbb{Z}^d$ be an $\textbf{a}$-valuation with value group $\mathbb{Z}^d$. We say that $R$ \textit{strongly dominated} by $V_{\nu}$ if $R$ is dominated by $V_{\nu}$ and $(\textbf{a},\textbf{u})>0$ for all $\textbf{u}\in \con(S)\setminus\{0\}$.
\end{definition}

Let $a\in\mathbb{R}^d$. One special kind of $a$-valuation is that of an $OK$-vaulation, which will play a central role in our proof of the technical theorem of Section \ref{sec6}. $OK$-vaulations are defined as follows.

\begin{definition}(Definition 3.1 \cite{D1})
    Let $(R,m,k)$ be a $d$-dimensional local domain with fraction field $K$. Fix an embedding $\mathbb{Z}^d\hookrightarrow \mathbb{R}$ defined by a vector $a\in\mathbb{R}^d$ and suppose that $\nu:K\to\mathbb{Z}^d$ is an $a$-valuation with associated valuation ring $(V_{\nu},m_{\nu},k_{\nu})$. If
    \begin{enumerate}
        \item $R$ is strongly dominated by $V_{\nu}$
        \item the resulting extension of residue fields $k\hookrightarrow k_{\nu}$ is finite, and
        \item there exists a point $ \textbf{h} \in\mathbb{Z}^d$ s.t.
        \begin{equation*}
            R\cap K_{\geqslant n\textbf{h}}\subset m^n\text{, for all } n\in\mathbb{N}
        \end{equation*}then we say that $(V_{\nu},m_{\nu},k_{\nu})$ is \textit{$OK$-relative to $R$}.
    \end{enumerate}
\end{definition}

\begin{definition}(Definition 3.2 \cite{D1})
    Let $R$ be a $d$-dimensional local domain with fraction field $K$. If there exists a valuation $\nu$ in $K$ with a value group $\mathbb{Z}^d$ whose valuation ring $(V_{\nu},m_{\nu},k_{\nu})$ is $OK$ relative to $R$, then we say that $R$ is an \textit{$OK$-domain}.
\end{definition}

The next result is crucial in the calculations of Section \ref{sec6} leading to our major theorems.

\begin{example}\label{ex1}
(Corollary 3.7 \cite{HJ})
    An excellent local domain that contains a field is an $OK$-domain. Consequently, any complete local domain that contains a field is also an $OK$-domain.
\end{example}

Suppose that $(R,m,k)$ is a $d$-dimensional local domain of characteristic $p>0$ with fraction field $K$. Fix a $\mathbb{Z}$-linear embedding of $\mathbb{Z}^d$ into $\mathbb{R}$ induced by a vector $a\in \mathbb{R}^d$, and a valuation $\nu:K^{\times}\to \mathbb{Z}^d$ that is $OK$-relative to $R$. Let $S$ be the semigroup $\nu(R)\subset \mathbb{Z}^d$, and let $C$ denote the closed cone in $\mathbb{R}^d$ generated by $S$.

If $M$ is a $R$-submodule of $F$, then we see that the $R$-module structure on $M$ induces a $k$-vector space structure on $\frac{M\cap K_{\geqslant u}}{M\cap K_{>u}}$ for $u\in\mathbb{Z}^d$. By definition, this vector space is nonzero if and only if there exists $x\in M$ such that $\nu(x)=u$.

Now we give two further definitions and some theorems which will be useful in the proofs of Section \ref{sec6}.

\begin{definition}\label{dfD13.7}
(Definition 3.7 \cite{D1})
    For a $R$-submodule $M$ of $K$, we define
    \begin{equation*}
        \nu^{(h)}(M)
        = \bigg\{u\in\mathbb{Z}^d\bigg{|}\dim_k\bigg(\frac{M\cap K_{\geqslant u}}{M\cap K_{>u}}\bigg)\geqslant h\bigg\}
    \end{equation*}for $1\leqslant h\leqslant [k_{\nu}:k]$.
\end{definition}

\begin{definition}(Definition 3.10 \cite{D1})
    A semigroup $S\subset\mathbb{Z}^d$ is called \textit{standard} if $S- S = \mathbb{Z}^d$ and $\con(S)$ is pointed. If $R$ is a complete $OK$ domain, with $OK$ valuation $v$, we have that $S:= \nu(D)$ is a standard semigroup.
\end{definition}

Now we mention three results which will be useful in the calculations of Section \ref{sec6}.

\begin{theorem}\label{cone1} (Theorem 4.10 \cite{HJ})
    For a standard semigroup $S$ in $\mathbb{Z}^d$, a $p$-system $T_{\bullet}$ in $S$, and a truncating halfspace $H$ for $\con(S)$, we have
    \begin{equation*}
        \lim_{q\to\infty}\frac{\#(T_q\cap qH)}{q^d}
        = \vol_{\mathbb{R}^d}(\Delta(S,T_{\bullet})\cap H).
    \end{equation*}
\end{theorem}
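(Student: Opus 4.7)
The plan is to reduce the lattice-point-counting limit to a volume limit, and to exploit the monotonicity built into the $p$-system axiom.

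First I would establish monotonicity. Iterating the inclusion $pT_q\subset T_{pq}$ gives $p^kT_q\subset T_{p^kq}$ for all $k\geqslant 0$, which after rescaling by $1/(p^kq)$ reads $\tfrac{1}{q}T_q\subset \tfrac{1}{p^kq}T_{p^kq}$. Since $\con(S)$ is closed under addition and scaling by nonnegative reals, the sets $A_q:=\bigl(\tfrac{1}{q}T_q+\con(S)\bigr)\cap H$ form an increasing chain as $q$ runs through powers of $p$. Pointedness of $\con(S)$ at the vector $a$ defining $H$ ensures $\con(S)\cap H$ is bounded, so each $A_q$ has finite Lebesgue measure, and monotone convergence yields
\[
\vol_{\mathbb{R}^d}\bigl(\Delta(S,T_{\bullet})\cap H\bigr)
=\lim_{q\to\infty}\vol_{\mathbb{R}^d}(A_q).
\]

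Next I would compare $\tfrac{1}{q^d}\#(T_q\cap qH)$ to $\vol_{\mathbb{R}^d}(A_q)$. Rescaling by $1/q$ identifies $T_q\cap qH$ with $\tfrac{1}{q}T_q\cap H$, a set inside the lattice $\tfrac{1}{q}\mathbb{Z}^d$ whose fundamental domain has volume $1/q^d$. Attaching such a fundamental cube to each lattice point produces a region whose volume equals $\tfrac{1}{q^d}\#(T_q\cap qH)$. Using $T_q+S\subset T_q$ together with standardness $S-S=\mathbb{Z}^d$, one can compare this thickened region to $A_q$ up to a symmetric difference concentrated in an $O(1/q)$-neighborhood of $\partial(\con(S)\cap H)$; that neighborhood contains at most $O(q^{d-1})$ lattice points of $\tfrac{1}{q}\mathbb{Z}^d$, so its contribution to the renormalized count is $O(1/q)$. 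This gives $\tfrac{1}{q^d}\#(T_q\cap qH)=\vol_{\mathbb{R}^d}(A_q)+o(1)$, and combining with the first step finishes the proof.

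The main obstacle is the boundary analysis: simultaneously sandwiching the thickened lattice region between two small enlargements/retractions of $A_q$, and controlling the symmetric difference both near the truncating hyperplane $\{(x,a)=\beta\}$ and along $\partial\con(S)$. Standardness of $S$ is essential here because it provides a bounded fundamental set for $\mathbb{Z}^d$ modulo $S$, so the lattice is uniformly distributed inside $\con(S)$; pointedness of $\con(S)$ keeps every region under consideration bounded so the $(d-1)$-dimensional boundary estimate actually yields an $O(q^{d-1})$ bound. A subtle point is that the inclusion $\tfrac{1}{q}T_q+\con(S)\subset A_{pq}$ is only approximate from the lattice side — one must absorb the cube thickening into $\con(S)$ by shifting slightly into the interior of the cone, which requires an interior point of $\con(S)$ (guaranteed by $\dim\con(S)=d$, itself a consequence of $S-S=\mathbb{Z}^d$).
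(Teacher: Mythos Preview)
The paper does not prove this statement. Theorem~\ref{cone1} is quoted from Hern\'andez--Jeffries (Theorem~4.10 \cite{HJ}) and invoked as a black box in Section~\ref{sec6}; there is no argument in the present paper to compare your proposal against.

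On its own merits, your outline follows the standard shape of such arguments. The monotonicity step is correct, and monotone convergence of $\vol_{\mathbb{R}^d}(A_q)$ to $\vol_{\mathbb{R}^d}(\Delta(S,T_{\bullet})\cap H)$ goes through cleanly. The substance is in the second step, and there your sketch is thinner than it should be. The assertion $\tfrac{1}{q^d}\#(T_q\cap qH)=\vol_{\mathbb{R}^d}(A_q)+o(1)$ is not a routine lattice-point count because $A_q$ varies with $q$, so any boundary-volume bound must be uniform in $q$. Your phrase ``bounded fundamental set for $\mathbb{Z}^d$ modulo $S$'' gestures at the right ingredient---the existence of a fixed $s_0\in S$ with $(s_0+\con(S))\cap\mathbb{Z}^d\subset S$, so that every lattice point of $qA_q$ lying in $s_0+\con(S)$ is forced into $T_q$ via the ideal property $T_q+S\subset T_q$---but this needs to be stated explicitly, and one still has to verify that $q(\con(S)\cap H)\setminus(s_0+\con(S))$ contains only $O(q^{d-1})$ lattice points. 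None of this invalidates the approach; it is simply where the actual proof lives, and the proposal treats it as bookkeeping.
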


\begin{theorem}\label{D1thm3.18}
(Theorem 3.18 \cite{D1})
Let $S$ be a standard semigroup in $\mathbb{Z}^d$. If $T_{\bullet}$ is a $p$-system of ideals in $S$ and $H$ is any truncating halfspace for $\con(S)$, then for any given $\epsilon>0$, there exists $q_0$ such that if $q\geqslant q_0$, we have
\begin{equation*}
    \lim_{e\to\infty}\frac{\#((p^eT_q+S)\cap p^eqH)}{p^{ed}q^d}
    \geqslant
    \vol_{\mathbb{R}^d}(\Delta(S,T_{\bullet})\cap H)
    -\epsilon.
\end{equation*}
\end{theorem}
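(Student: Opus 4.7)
The plan is to reduce the statement to a direct application of Theorem \ref{cone1} by holding $q$ fixed and treating $\{p^e T_q + S\}_{e}$ as an auxiliary $p$-system indexed by $e$. First I would fix $q$ and define $\widetilde{T}_{\bullet}$ by $\widetilde{T}_{p^e} := p^e T_q + S$. The $p$-system axioms are routine to verify: $\widetilde{T}_{p^e} + S \subset \widetilde{T}_{p^e}$ because the $S$ summand absorbs $S$, and $p\widetilde{T}_{p^e} = p^{e+1}T_q + pS \subset p^{e+1}T_q + S = \widetilde{T}_{p^{e+1}}$. I would then compute the associated $p$-body: since $\tfrac{1}{p^e}\widetilde{T}_{p^e} = T_q + \tfrac{1}{p^e}S$ and $\tfrac{1}{p^e}S \subset \con(S)$, the definition collapses to $\Delta(S,\widetilde{T}_{\bullet}) = T_q + \con(S)$.

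Next, note that $qH = \{x : (x,a) < q\beta\}$ is also a truncating halfspace for $\con(S)$. Applying Theorem \ref{cone1} to $\widetilde{T}_{\bullet}$ with truncating halfspace $qH$ gives
\begin{equation*}
\lim_{e\to\infty}\frac{\#((p^e T_q + S) \cap p^e q H)}{p^{ed}} = \vol_{\mathbb{R}^d}\bigl((T_q + \con(S)) \cap qH\bigr).
\end{equation*}
Dividing by $q^d$ and using the homogeneity $\vol_{\mathbb{R}^d}(qE) = q^d \vol_{\mathbb{R}^d}(E)$ together with $\tfrac{1}{q}\con(S) = \con(S)$ yields
\begin{equation*}
\lim_{e\to\infty}\frac{\#((p^e T_q + S) \cap p^e q H)}{p^{ed}\, q^d} = \vol_{\mathbb{R}^d}\!\left(\Bigl(\tfrac{1}{q}T_q + \con(S)\Bigr) \cap H\right).
\end{equation*}

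It remains to let $q \to \infty$. The inclusion $pT_q \subset T_{pq}$ implies $\tfrac{1}{q}T_q \subset \tfrac{1}{pq}T_{pq}$, so $\bigl\{\tfrac{1}{q}T_q + \con(S)\bigr\}_{q = p^e}$ is an increasing sequence of subsets of $\con(S)$ whose union is precisely $\Delta(S, T_{\bullet})$ by the definition of the $p$-body. Because $\con(S)$ is pointed, $\con(S) \cap H$ is bounded, so the truncated sets lie in a set of finite Lebesgue measure. Continuity of Lebesgue measure along increasing unions then gives
\begin{equation*}
\lim_{q\to\infty}\vol_{\mathbb{R}^d}\!\left(\Bigl(\tfrac{1}{q}T_q + \con(S)\Bigr) \cap H\right) = \vol_{\mathbb{R}^d}(\Delta(S, T_{\bullet}) \cap H).
\end{equation*}
Given $\epsilon > 0$, choosing $q_0$ large enough that the $q = q_0$ term is within $\epsilon$ of the right-hand side, and combining with the previous displayed equation, yields the claimed inequality for all $q \geqslant q_0$.

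The argument is essentially a bookkeeping exercise once the auxiliary $p$-system is in place; the only real obstacle is tracking the scaling carefully between the halfspace $H$, its dilation $qH$, and the further dilation $p^e q H$, while identifying $\Delta(S, \widetilde{T}_{\bullet})$ correctly. The monotone exhaustion $\bigcup_e \bigl(\tfrac{1}{p^e}T_{p^e} + \con(S)\bigr) = \Delta(S, T_{\bullet})$ that drives the final passage to the limit is immediate from the very definition of the $p$-body and the $p$-system axiom $pT_q \subset T_{pq}$.
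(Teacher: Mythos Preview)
The paper does not supply a proof of this statement; it is quoted verbatim as Theorem~3.18 of \cite{D1} and used as a black box in Section~\ref{sec6}. So there is no in-paper argument to compare against.

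That said, your argument is correct and is essentially the natural proof. The key steps---recognising $\{p^eT_q+S\}_e$ as a $p$-system for each fixed $q$, identifying its $p$-body as $T_q+\con(S)$, invoking Theorem~\ref{cone1} with the dilated halfspace $qH$, rescaling by $q^{-d}$ using that $\con(S)$ is a cone, and then using monotone convergence of Lebesgue measure along the nested exhaustion $\bigl(\tfrac{1}{q}T_q+\con(S)\bigr)\cap H \nearrow \Delta(S,T_{\bullet})\cap H$---are all sound. One small point worth making explicit: the identification $\Delta(S,\widetilde{T}_{\bullet})=T_q+\con(S)$ uses $0\in S$ (so that $\con(S)\subset \tfrac{1}{p^e}S+\con(S)$); in the applications in this paper $S=\nu(R)$ contains $0=\nu(1)$, and in any case the two sets differ at most on a measure-zero boundary, so the volume identity used downstream is unaffected.
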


\begin{lemma}\label{lem-new5.10}
(Corollary 5.10 \cite{HJ}, Proposition 4.8 \cite{D1}) For any $p$-family $F_{\bullet}$ in $R$, we have
\begin{equation*}
    \lim_{q\to\infty}\frac{\#(\nu^{(h)}(F_q)\cap qH)}{q^d}
    = \vol_{\mathbb{R}^d}(\Delta(S,\nu(F_{\bullet}))\cap H)
\end{equation*}for $1\leqslant h\leqslant [k_v:k]$, and any truncating halfspace $H$ of $C$.
\end{lemma}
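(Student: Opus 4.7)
The plan is to deduce the statement from Theorem \ref{cone1} applied to the $p$-system $\nu(F_\bullet):=\{\nu(F_q)\}$, and then bridge the discrepancy between $\nu(F_q)$ and $\nu^{(h)}(F_q)$ via a fixed-translate argument exploiting the OK-domain structure.

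First I verify that $\nu(F_\bullet)$ is a $p$-system in $S=\nu(R)$. For any nonzero $r\in R$ and $x\in F_q$, $rx\in F_q$ and $\nu(rx)=\nu(r)+\nu(x)$, giving $\nu(F_q)+S\subseteq \nu(F_q)$. The $p$-family condition $F_q^{[p]}\subseteq F_{pq}$ together with $\nu(x^p)=p\nu(x)$ yields $p\nu(F_q)\subseteq \nu(F_{pq})$. Since $\nu^{(1)}(F_q)=\nu(F_q)$ by Definition \ref{dfD13.7}, Theorem \ref{cone1} applied to $\nu(F_\bullet)$ proves the lemma in the case $h=1$.

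For $h$ with $1\leqslant h\leqslant [k_\nu:k]$, the inclusion $\nu^{(h)}(F_q)\subseteq \nu(F_q)$ together with the $h=1$ case immediately gives the $\limsup$ upper bound. For the matching $\liminf$, I aim to exhibit a fixed vector $\mathbf{s}\in S$ (depending only on $R$ and $\nu$, not on $q$ or $u$) satisfying $\nu(F_q)+\mathbf{s}\subseteq \nu^{(h)}(F_q)$ for all $q$. To construct $\mathbf{s}$, one produces $h$ elements $z_1,\ldots,z_h\in R$ sharing a common valuation $\mathbf{s}$ whose classes in $R\cap K_{\geqslant \mathbf{s}}/R\cap K_{>\mathbf{s}}$ are $k$-linearly independent. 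Then for any $x\in F_q$ with $\nu(x)=u$, the products $z_jx\in F_q$ share valuation $u+\mathbf{s}$, and their classes in the graded piece of $F_q$ remain $k$-linearly independent, since multiplication by the nonzero class $\overline{x}\in k_\nu$ is $k$-linear and injective on the associated graded. This shows $u+\mathbf{s}\in \nu^{(h)}(F_q)$. Because a constant translate affects the truncating halfspace only on a boundary region of lower-order measure, one obtains
\[
\liminf_{q\to\infty}\frac{\#(\nu^{(h)}(F_q)\cap qH)}{q^d}\geqslant \lim_{q\to\infty}\frac{\#(\nu(F_q)\cap qH)}{q^d}=\vol_{\mathbb{R}^d}(\Delta(S,\nu(F_\bullet))\cap H).
\]

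The main obstacle is the construction of $\mathbf{s}$ together with the $z_j$'s, i.e., ensuring that $R$ realizes $h\leqslant [k_\nu:k]$ linearly independent residues at a single common valuation. This is precisely where the OK-hypothesis is essential, in particular the bound $R\cap K_{\geqslant n\mathbf{h}}\subseteq\mathfrak{m}^n$, which controls how the finite residue field extension $k\hookrightarrow k_\nu$ is realized by elements of $R$; without it, $R$ could see only a strict $k$-subspace of $k_\nu$ at every valuation, and the Frobenius-power machinery of the OK framework is what upgrades residue-field data from $V_\nu$ to $R$ in a uniform way.
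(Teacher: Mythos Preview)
The paper does not prove this lemma; it simply cites Corollary~5.10 of \cite{HJ} and Proposition~4.8 of \cite{D1}. Your outline via the fixed-translate sandwich $\nu(F_q)+\mathbf{s}\subseteq\nu^{(h)}(F_q)\subseteq\nu(F_q)$ is correct, but the construction of $\mathbf{s}$, which you label the ``main obstacle'' and attribute to the linear-growth clause of the OK hypothesis, is in fact elementary and uses neither that clause nor any ``Frobenius-power machinery''. Choose $w_1,\ldots,w_t\in V_\nu$ whose residues form a $k$-basis of $k_\nu$ (where $t=[k_\nu:k]$); since $K$ is the fraction field of $R$, one may write $w_j=a_j/b$ with a common denominator $b\in R\setminus\{0\}$ and $a_j\in R$. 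Then $\mathbf{s}:=\nu(b)\in S$ and $z_j:=a_j$ work: $\nu(z_j)=\nu(b)+\nu(w_j)=\mathbf{s}$, and the classes $\overline{z_j}=\overline{b}\cdot\overline{w_j}$ remain $k$-linearly independent in $k_\nu$ since $\overline{b}\neq 0$. Only the finiteness of $[k_\nu:k]$ and the fact that $\nu$ is a valuation on the fraction field of $R$ enter here. With this $\mathbf{s}$ in hand, your remaining steps---the multiplication-by-$\overline{x}$ argument for linear independence and the boundary estimate (e.g.\ by squeezing with the smaller halfspace $H_{\beta-\epsilon}$ for $\epsilon>0$ and letting $\epsilon\to 0$)---go through as written.
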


\section{Generalized Hilbert-Kunz multiplicities and existence conditions}\label{sec2}

In this section we define generalized Hilbert-Kunz multiplicity and we define and discuss the conditions LC and $p(c)$ on an ideal in a local ring which are important for showing the existence of this multiplicity. The condition $p(c)$ will play an important role in future Section \ref{sec7}. The condition $p(c)$ implies the condition LC, and the condition LC is used to prove existence of generalized Hilbert-Kunz multiplicities. All rings $R$ in this paper are Noetherian and  of prime characteristic $p>0$. We begin by defining the generalized Hilbert-Kunz multiplicity of an ideal, after which we will discuss how its existence is obtained from the conditions $p(c)$ and (LC) below

\begin{definition}\label{ghk df}(Epstein, Yao \cite{EY})
    Let $(R,m)$ be a Noetherian local ring and let $I\subset R$ be an ideal such that $\ell_R(H^0_m(R/I^{[q]}))$ is finite for all $q=p^e$. The \textit{generalized Hilbert-Kunz multiplicity} of $I$ is defined as
    \begin{equation*}
        e_{_{g}HK}(I)=e_{_{g}HK}(I,R)= \limsup_{p^e=q\to\infty}\frac{\ell_R(H^0_m(R/I^{[q]}))}{q^d}.
    \end{equation*}We have $(I^{[q]})^{\sat}/I^{[q]} = H^0_m(R/I^{[q]})$ for all $q$, where the saturation $J^{\sat}$ of an ideal $J$ is $J^{\sat}:= J:m^{\infty} =\displaystyle \bigcup_{i\geq 1}J:m^i$.
\end{definition}

Now we discuss the conditions $p(c)$ and (LC) on an ideal $I$ which have been historically important in the study of generalized Hilbert-Kunz multiplicities. Consider the following question.

\begin{question}\label{q1} Suppose that $I$ is an ideal in a Noetherian local ring $(R,m)$ of prime characteristic $p>0$. Does there exist $c\in\mathbb{Z}_{>0}$ such that for all powers $q=p^e$, we have
    \begin{equation}\label{p(c) eq}
        I^{[q]}\cap m^{qc} = (I^{[q]})^{\sat}\cap m^{qc}?
    \end{equation}
    \end{question}

If $I$ satisfies the condition above for some $c>0$, then we say that $I$ satisfies condition $p(c)$. Note that if $I$ is an ideal which satisfies $p(c)$, then $I$ satisfies the condition (LC), which states that
\begin{equation}
I^{\sat}m^{cq}\subset I^{[q]}.
\end{equation}
Conversely, it was shown  (see Lemma 6.6 \cite{HJ}) that if $I\subset R$ is an ideal satisfying (LC) and $\dim(R/I) = 1$, then $I$ actually satisfies condition $p(c)$. That is, the conditions $p(c)$ and (LC) on an ideal $I$ are equivalent when $\dim(R/I)=1$. It is not generally known when an ideal satisfies either condition $p(c)$ or (LC). Vraciu proved that if every ideal satisfies (LC), then $e_{_gHK}(I)$ exists for all ideals $I\subset R$. In  \cite[Proposition 6.2]{HJ}, Jeffries and Hernández proved that for an ideal $I\subset R$, $e_{_gHK}(I)$ exists as long as that one particular ideal satisfies (LC). In Section \ref{sec7}, we show that if an ideal $I$ satisfies condition $p(c)$, then $e_{_gHK}(I)$ is a limit of Amao-type multiplicities, see Theorem \ref{thm6-1} for a precise statement. Amao-type multiplicities are defined in Section \ref{sec7}.

The condition (LC) was also studied by Hochster and Hunecke in relation to the (now known to be false) question of whether tight closure localizes. Vraciu showed in Theorem 2.1 \cite{V1} that (LC), together with uncountable prime avoidance implies that generalized Hilbert-Kunz function of an ideal $I$ is a linear combinations of Hilbert-kunz functions (in a way that does not depend on $I$).

Now we turn to discussing how this assumption works for a $p$-family of ideals in a local ring of characteristic $p$, and what results we get from assuming it.

\begin{definition}(c.f. \cite{CS})\label{p(c)} Let $(R,m)$ be a Noetherian local ring of prime characteristic $p>0$. We will say that that a $p$-family $\cal{F}:=\{I_q\}$ of ideals in $R$ \textit{satisfies the condition $p(c)$} if there exists $c\in\mathbb{Z}_{>0}$ such that for all powers $q=p^e$, we have
    \begin{equation}
        I_q\cap m^{qc} = I_q^{\sat}\cap m^{qc}.
    \end{equation}     In particular, an ideal $I$ in $R$ satisfies the condition $p(c)$ if  the $p$-family $I_{\bullet}=\{I^{[q]}\}$ satisfies it.
\end{definition}

 The condition $p(c)$ on a $p$-family $\{I_q\}$ implies the condition (LC) of \cite{V1} on that $p$-family, which states that $I_q^{\sat}m^{cq}\subset I_q$. If an ideal $I$ satisfies $p(c)$ for some $c$, then $\ell_R((I^{[q]})^{\sat}/I^{[q]})$ is finite (for all $q$), so that the generalized HK multiplicity of $I$ is well-defined. Before stating how we will make use of this condition, we first state a slightly more general condition, associated to two $p$-families.

\begin{definition}\label{gendef}
    Let $(R,m)$ be a Noetherian local ring. Suppose that $I_{\bullet} = \{I_q\}\subset J_{\bullet} = \{J_q\}$ are $p$-families in $R$. We will say that \textit{the inclusion} $I_{\bullet}\subset J_{\bullet}$ \textit{satisfies property $p(c)$} if there exists $c\in\mathbb{Z}_{>0}$ such that
    \begin{equation*}
        I_q\cap m^{cq} = J_q\cap m^{cq}
    \end{equation*}for all $q = p^e$ ($e\geqslant 1$).
\end{definition}

 In Theorem 5.14 \cite{HJ} they show that if $R$ is a complete reduced local ring and $I_{\bullet}\subset J_{\bullet}$ are $p$-families such that the inclusion satisfies property $p(c)$ for some $c>0$. Then the limit
\begin{equation*}
    \lim_{q\to\infty} \frac{\ell_R(J_q/I_q)}{q^d}
\end{equation*}exists.
In Section \ref{sec7} we also show that if $I_{\bullet} = \{I_q\}$ is a $p$-family satisfying condition $p(c)$ then the generalized Hilbert-Kunz multiplicity of the family $I_{\bullet}$ is a limit of Amao-type multiplicities. See Theorem \ref{thm6-2} for a precise statement.

\section{A general volume formula}\label{sec6}

In this section we will prove a general theorem from which a volume equals multiplicity formula for the generalized Hilbert-Kunz multiplicity in Theorem \ref{thm6-1} follows. To accomplish this, we will define some machinery associated to $p$-families. For a Noetherian ring $R$ we denote the set of minimal primes $P$ of $R$ for which $\dim(R/P) = \dim(R)$ by $\ash(R)$.

\begin{theorem}\label{generalmult}
Suppose that $R$ is an analytically unramified local ring of dimension $d$. Let that $I_{\bullet} = \{I_q\}\subset J_{\bullet} = \{J_q\}$ be $p$-families. Suppose further that, for each $q' = p^e$, there exists $p$-families $I_{q',\bullet} = \{I_{q',q}\}\subset J_{q',\bullet} = \{J_{q',q}\}$ such that $J_{q',1} = J_{q'}$, $J_{q',q}\subset J_{qq'}$, $I_{q',1} = I_{q'}$, and $I_{q',q}\subset I_{qq'}$ for all $q$. 
Suppose that there is a positive number $c$ such that
\begin{equation*}
    I_{q',q}\cap m_R^{cqq'} = J_{q',q}\cap m_R^{cqq'}
\end{equation*}for all $q,q'$. Suppose further that whenever $P\in\ash(R)$ such that $P\supset I_1$, we have $P\supset I_q$ for all $q$. Let
\begin{equation*}
    \cal{F}(I_{\bullet},J_{\bullet}):= \lim_{q\to\infty}\frac{\ell_R(J_q/I_q)}{q^d}.
\end{equation*}
Then the above limit exists. Moreover,
\begin{equation*}
    \cal{G}(J_{q',\bullet}, I_{q',\bullet}):=\lim_{q\to\infty}
    \frac{\ell_R(J_{q',q}/I_{q',q})}{q^d}
\end{equation*}exists for all $q'$ and the limit
\begin{equation*}
    \lim_{q'\to\infty}\frac{\cal{G}(J_{q',\bullet}, I_{q',\bullet})}{(q')^d}
\end{equation*}exists, and we have the following formula
\begin{equation}
    \cal{F}(I_{\bullet},J_{\bullet})
    =\lim_{q'\to\infty}\frac{\cal{G}(J_{q',\bullet}, I_{q',\bullet})}{(q')^d}.
\end{equation}
\end{theorem}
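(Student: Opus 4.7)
My plan is a three-step reduction-and-calculation: first reduce to the case where $R$ is a complete local domain, then use an OK-valuation to translate lengths into lattice-point counts in a cone, and finally deduce the formula by a squeeze argument at the level of $p$-bodies. For the reduction step, I would pass to $\hat R$, which preserves the lengths $\ell_R(J_q/I_q)$ and, since $R$ is analytically unramified, yields a reduced Noetherian local ring. By the standard additivity argument for asymptotic lengths (as used in the proof of Theorem 5.14 \cite{HJ}), one writes
$$\ell_{\hat R}(J_q/I_q) = \sum_{P \in \ash(\hat R)} \ell_{\hat R_P}(\hat R_P)\,\ell_{\hat R/P}\bigl((J_q + P)/(I_q+P)\bigr) + o(q^{d}),$$
with the analogous formula for $\ell_R(J_{q',q}/I_{q',q})$. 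The hypothesis on $\ash(R)$, together with the inclusions $I_{q',q}\subset I_{qq'}$, guarantees that for any minimal prime $P$ containing $I_1$ the images of $I_q$, $I_{q',q}$ in $\hat R/P$ vanish, so the contribution from those primes on the $I$-side collapses to a computation involving only the $J$-family there, and a parallel bookkeeping on the $J$-side applies. This reduces the problem to the case where $R$ is a complete local domain containing a field, which is an OK-domain by Example \ref{ex1}.

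Next, I would fix an OK-valuation $\nu: R \setminus\{0\} \to \mathbb{Z}^d$ with semigroup $S := \nu(R)$, cone $\con(S)$, and height vector $\mathbf{h}$. Setting $H$ to be the truncating halfspace determined by $\mathbf{h}$, the defining property $R \cap K_{\geq n\mathbf{h}} \subset m^n$ combined with the hypothesis $I_{q',q}\cap m^{cqq'} = J_{q',q}\cap m^{cqq'}$ implies that $\ell_R(J_{q',q}/I_{q',q})$ differs from a count of lattice points of $\nu(J_{q',q})\setminus\nu(I_{q',q})$ inside the scaled halfspace $cq'qH$ only by a term negligible after dividing by $q^d$. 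Applying Lemma \ref{lem-new5.10} to each of the $p$-systems $\nu(I_\bullet), \nu(J_\bullet), \nu(I_{q',\bullet}), \nu(J_{q',\bullet})$ then yields
$$\mathcal{F}(I_\bullet,J_\bullet) = [k_\nu:k]\bigl(\vol(\Delta(S,\nu(J_\bullet))\cap cH) - \vol(\Delta(S,\nu(I_\bullet))\cap cH)\bigr),$$
together with the analogous identity for $\mathcal{G}(J_{q',\bullet}, I_{q',\bullet})$ using truncating halfspace $cq'H$. This in particular establishes that both limits exist.

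For the concluding formula, I would set $J^{(q')} := \tfrac{1}{q'}\Delta(S,\nu(J_{q',\bullet}))$ and $I^{(q')} := \tfrac{1}{q'}\Delta(S,\nu(I_{q',\bullet}))$. Since $\con(S)$ is invariant under positive scaling, dividing the $\mathcal{G}$-expression by $(q')^d$ yields
$$\frac{\mathcal{G}(J_{q',\bullet}, I_{q',\bullet})}{(q')^d} = [k_\nu:k]\bigl(\vol(J^{(q')}\cap cH) - \vol(I^{(q')}\cap cH)\bigr).$$
The containment $J_{q',q}\subset J_{qq'}$ gives $J^{(q')} \subset \Delta(S,\nu(J_\bullet))$, while the equality $J_{q',1}=J_{q'}$ gives $\tfrac{1}{q'}\nu(J_{q'}) + \con(S) \subset J^{(q')}$ (again using scale-invariance of $\con(S)$). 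Theorem \ref{D1thm3.18} forces $\vol\bigl((\tfrac{1}{q'}\nu(J_{q'}) + \con(S))\cap cH\bigr) \to \vol(\Delta(S,\nu(J_\bullet))\cap cH)$ as $q'\to\infty$, so squeezing gives the same limit for $\vol(J^{(q')}\cap cH)$. The identical argument applied to the $I$-side yields the claimed equality. I expect the main obstacle to be the translation in the second step: moving from the module-theoretic length difference $\ell_R(J_{q',q}/I_{q',q})$ to a lattice-point count with error terms uniform in \emph{both} $q$ and $q'$, since the truncating halfspace scales with $cq'q$ and the outer limit in $q'$ can only be interchanged with the inner valuation approximation if this uniformity is actually achieved; the reduction to the domain case is also delicate because the $p(c)$ condition need not descend cleanly to the images in $\hat R/P$ for every $P\in\ash(\hat R)$.
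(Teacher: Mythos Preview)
Your treatment of the complete-domain case is essentially the paper's argument: the OK-valuation translation of $\ell_R(J_{q',q}/I_{q',q})$ into lattice-point counts via the $\nu^{(h)}$ filtration (Lemma~\ref{lem-new5.10}), the identification of $\mathcal{F}$ and $\mathcal{G}$ with volume differences of $p$-bodies, and the squeeze using $\tfrac{1}{q'}\nu(J_{q'})+\con(S)\subset J^{(q')}\subset \Delta(S,\nu(J_\bullet))$ together with Theorem~\ref{D1thm3.18} is exactly how the paper proceeds. One small correction: under the $p(c)$ hypothesis and the OK property the length is \emph{equal} to the lattice-point difference (summed over $h=1,\dots,[k_\nu:k]$), not merely equal up to $o(q^d)$; this exactness matters for the uniformity you worry about.

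The genuine gap is your reduction from the reduced (analytically unramified) case to the domain case. The additivity formula you wrote,
\[
\ell_{\hat R}(J_q/I_q)=\sum_{P\in\ash(\hat R)}\ell_{\hat R/P}\bigl((J_q+P)/(I_q+P)\bigr)+o(q^d),
\]
is not standard for finite-length modules and, more seriously, the descended families $(I_q+P)/P\subset (J_q+P)/P$ in $\hat R/P$ need not satisfy any $p(c)$ condition: from $I_q\cap m^{cq}=J_q\cap m^{cq}$ one cannot conclude $(I_q+P)\cap(m^{cq}+P)=(J_q+P)\cap(m^{cq}+P)$, because intersection does not commute with sum modulo $P$. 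You flagged this yourself, and it is fatal to the reduction as written. The paper avoids this entirely by a different construction: using Artin--Rees it finds $\beta>0$ and ideals $\omega_n=\bigcap_i(m^n+P_i)$ with $\omega_{\beta q}\cap I_q=\omega_{\beta q}\cap J_q$, then telescopes $\ell_R(J_q/I_q)$ through the filtration $E_q^j=\bigcap_{i\le j}(m^{\beta q}+P_i)$. Each successive quotient is \emph{isomorphic} to a quotient in $R_{j+1}=\hat R/P_{j+1}$, and the paper then applies the domain case not to the naive images $J_qR_{j+1}$ but to auxiliary families $\overline{J}_q=L_q^jR_{j+1}$ and $\overline{I}_q:=\overline{J}_q\cap m_{R_{j+1}}^{\beta q}$, which satisfy the $p(c)$ condition \emph{by definition}. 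The $\ash$ hypothesis is used here (via Remark~\ref{clrmk4.2}) to handle the case $\overline{I}_1=0$. This filtration argument, not an additivity formula, is what makes the reduction work with the required uniformity in both $q$ and $q'$.
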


\begin{remark}\label{clrmk4.2}
    Suppose that $P\in\ash(R)$ and $I_1\subset P$. Then $I_q,J_q\subset P$ for all $q$ and $I_{q',q},J_{q',q}\subset P$ for all $q',q$.
\end{remark}

\begin{remark}Suppose that $R$ is reduced.
    If $I_1\neq 0$, then $J_q,I_{q',q},J_{q',q}\neq 0$ for all $q',q$.
\end{remark}
\begin{proof}
    Let $a\in I_1\setminus \{0\}$. We have $0\neq a^q\in I_1^q\subset I_q\subset J_q$ so that $I_q,J_q\neq 0$. Moreover, $0\neq b:=a^{q'}\in I_1^{q'}\subset I_{q'} = I_{q',1}$. Consequently, $0\neq b^q\in I_{q',1}^q\subset I_{q',q}\subset J_{q',q}$ so that $I_{q',q},J_{q',q}\neq 0$.
\end{proof}

In order to prove Theorem \ref{generalmult}, we will first reduce to the case that $R$ is complete. Since $R\to \widehat{R}$ is flat, we see that
\begin{equation*}
    \begin{split}
        I_{q',q}\widehat{R}\cap m_{\widehat{R}}^{cq'q}
    &= I_{q',q}\widehat{R}\cap m_R^{cq'q}\widehat{R}
    = (I_{q',q}\cap m_R^{cq'q})\widehat{R}
    \\&= (J_{q',q}\cap m_R^{cq'q})\widehat{R}
    = J_{q',q}\widehat{R}\cap m_{\widehat{R}}^{cq'q}
    \end{split}
\end{equation*}
We will show that if $Q\in \ash(\widehat{R})$ and $I_1\widehat{R}\subset Q$, then $I_q\widehat{R}\subset Q$ for all $q$. We have that $I_1\subset (I_1\widehat{R})\cap R\subset Q\cap R$ and $Q\cap R\in \spec(R)$. Since $(Q\cap R)\widehat{R} \subset Q$, we have
\begin{equation*}
    \begin{split}
        d = \dim(R)&\geqslant \dim(R/Q\cap R)
    =\dim(\widehat{R/Q\cap R}) = \dim(\widehat{R}/((Q\cap R)\widehat{R})
    \\&\geqslant \dim\widehat{R}/Q = d
    \end{split}
\end{equation*}since $(Q\cap R)\widehat{R}\subset Q$, which shows that $\dim(R/Q\cap R) = d$. Now by hypothesis, $I_q\subset Q\cap R$ for all $q$ so that
\begin{equation*}
    I_q\widehat{R}\subset (Q\cap R)\widehat{R}\subset Q
\end{equation*}for all $q$. Hence, the hypothesis of Theorem \ref{generalmult} hold for the $p$-family $\widehat{I_{\bullet}}:= \{I_q\widehat{R}\}$ in the ring $\widehat{R}$.

Since $m_R^{cqq'}(J_{q',q}/I_{q',q})=0$, we have that
$\ell_{\widehat{R}}(\widehat{J_{q',q}}/\widehat{I_{q',q}})$ for all $q$ and $q'$ (since $R\to\widehat{R}$ is local and faithfully flat, we have that $\ell_{\widehat{R}}(M) = \ell_R(M)$ for any $\widehat{R}$-module $M$). Therefore we may assume that $R$ is complete (in the statement of Theorem \ref{generalmult}) by replacing $R$, $J_q$, $I_q$, $I_{q',q}$, and $J_{q',q}$ with $\widehat{R}$, $J_q\widehat{R}$, $I_q\widehat{R}$, $I_{q',q}\widehat{R}$, and $J_{q',q}\widehat{R}$ for all $q$ and $q'$.

We will now prove Theorem \ref{generalmult} in the case that $R$ is complete domain. Now we will assume that $(R,m)$ is a $d$-dimensional complete local domain (of characteristic $p>0$). By Corollary 3.7 \cite{HJ}, $R$ is an $OK$ domain. Let $K$ be a fraction field of $R$. Since $R$ is $OK$, there is a valuation $\nu:K^{\times}\to \mathbb{Z}^d$ which is $OK$-relative to $R$, and let $(V_{\nu},m_{\nu},k_{\nu})$ be the valuation ring of $v$.\\

If $I_1=0$, then $I_q,J_q,I_{q',q},J_{q',q}=0$ for all $q,q'$ by Remark \ref{clrmk4.2}. Thus Theorem \ref{generalmult} holds in this case. Consequently we may assume that $I_1\neq 0$, so that $J_q,I(q')_1,J_{q',q}\neq 0$ for all $q',q$.

Let $S = \nu(R)\subset \mathbb{Z}^d$ be the valuation semigroup and let $C = \con(S)$ denote the closed cone in $\mathbb{R}^d$ generated by $S$.


By assumption,
\begin{equation}\label{eq7.0}
    J_q\cap m^{cq} = I_q\cap m^{cq}
\end{equation}for all $q$.

Since $\nu$ is $OK$ relative to $R$, there exists $v\in\mathbb{Z}^d$ such that $R\cap K_{\geqslant nv}\subset m^n$ for $n\in\mathbb{N}$. Let $w = cv$. Now by (\ref{eq7.0})
\begin{equation}\label{eq7.6}
    I_q \cap K_{\geqslant qw} = J_q\cap K_{\geqslant qw}
\end{equation}  for all $q = p^e$.

For all $q = p^e$ and $u\in\mathbb{Z}^d$, we have the following two exact sequences.
\begin{equation}\label{eq7.3}
    0\to \frac{I_q}{I_q\cap K_{\geqslant u}}\to \frac{J_q}{I_q\cap K_{\geqslant u}}\to \frac{J_q}{I_q}\to 0
\end{equation}
\begin{equation}\label{eq7.4}
    0\to \frac{J_q\cap K_{\geqslant u}}{I_q\cap K_{\geqslant u}}\to \frac{J_q}{I_q\cap K_{\geqslant u}}\to \frac{J_q}{J_q \cap K_{\geqslant u}}\to 0
\end{equation}

Consequently

\begin{equation}\label{eq7.5}
    \begin{split}
        \ell_R(J_q/I_q)
        &= \ell_R(J_q/I_q\cap K_{\geqslant u})
        -\ell_R(I_q/I_q\cap K_{\geqslant u})\\
        &= \ell_R(J_q/J_q\cap K_{\geqslant u})
        + \ell_R(J_q\cap K_{\geqslant u}/I_q\cap K_{\geqslant u})
        - \ell_R(I_q/I_q\cap K_{\geqslant u}).
    \end{split}
\end{equation}

Let
\begin{equation*}
    H = \{ u'\in\mathbb{Z}^d\mid (a,u')<(a,w)\}.
\end{equation*}which is a truncating halfspace for $S$. Let $t = [k_{\nu}:k]$. Now by equations (\ref{eq7.5}), (\ref{eq7.6}) and Lemma 3.9 \cite{D1}, we have for all $q$,
\begin{equation}\label{eq7.7}
    \begin{split}
        \ell_R(J_q/I_q)
        &= \ell_R\bigg(\frac{J_q}{J_q\cap K_{\geqslant qw}}\bigg)
        +\ell_R\bigg(\frac{J_q\cap K_{\geqslant qw}}{I_q\cap K_{\geqslant qw}}\bigg)
        - \ell_R\bigg(\frac{I_q}{I_q\cap K_{\geqslant u}}\bigg)\\
        &=
        \sum_{h=1}^t\# (\nu^{(h)}(J_q)\cap qH)
        - \sum_{h=1}^t \# (\nu^{(h)}(I_q)\cap qH).
    \end{split}
\end{equation}
Similarly, we have 
\begin{equation}\label{eq7.8}
    \begin{split}
        \ell_R(J_{q',q}/I_{q',q})
        &= \ell_R\bigg(\frac{J_{q',q}}{J_{q',q}\cap K_{\geqslant qq'w}}\bigg)
        +\ell_R\bigg(\frac{J_{q',q}\cap K_{\geqslant qq'w}}{I_{q',q}\cap K_{\geqslant qq'w}}\bigg)
        - \ell_R\bigg(\frac{I_{q',q}}{I_{q',q}\cap K_{\geqslant qq'w}}\bigg)\\
        &=
        \sum_{h=1}^t\# (\nu^{(h)}(J_{q',q})\cap qq'H)
        - \sum_{h=1}^t \# (\nu^{(h)}(I_{q',q})\cap qq'H)
    \end{split}
\end{equation}for all $q',q$.

For all $q'$, we have that
\begin{equation*}
        q'H= \{q'u'\in\mathbb{R}^d\mid (a,q'u')<(a,q'w)\}
    =  \{u\in\mathbb{R}^d\mid (a,u)<(a,q'w)\}
    \end{equation*}
so that $q'H$ is a truncating halfspace of $S$. By Lemma \ref{lem-new5.10} we have the following equations.

\begin{equation}\label{eq8.1}
    \begin{split}
        \lim_{q\to\infty}\frac{\#(\nu^{(h)}(I_q)\cap qH)}{q^d}
        &= \vol_{\mathbb{R}^d}(\Delta(S,\nu(I_{\bullet}))\cap H)\\
         \lim_{q\to\infty}\frac{\#(\nu^{(h)}(J_q)\cap qH)}{q^d}
        &= \vol_{\mathbb{R}^d}(\Delta(S,\nu(J_{\bullet}))\cap H)\\
        \lim_{q\to\infty}\frac{\#(\nu^{(h)}(I_{q',q})\cap q(q'H))}{q^d}
        &= \vol_{\mathbb{R}^d}(\Delta(S,\nu(I_{q',\bullet}))\cap q'H)\\
        \lim_{q\to\infty}\frac{\#(\nu^{(h)}(J_{q',q})\cap q(q'H))}{q^d}
        &= \vol_{\mathbb{R}^d}(\Delta(S,\nu(J_{q',\bullet}))\cap q'H)
    \end{split}
\end{equation}for all $q'$.\\

Let $T_q = \nu(J_q)$ ($= \nu(J_q)+S$), $E_q = \nu(I_q)$, $T_{q',q} = \nu(J_{q',q})$ and $E_{q',q} = \nu(I_{q',q})$ for all $q$ and $q'$. We see that $T_{\bullet} = \{T_q\}$, $E_{\bullet} = \{E_q\}$, $T_{q',\bullet} = \{T_{q',q}\}$, and $E_{q',\bullet} = \{E_{q',q}\}$ are all $p$-systems in $S$. Also write $\nu(J_{\bullet}) := \{\nu(J_q)\} = T_{\bullet}$. By Definition 3.12 \cite{D1}, we have that
\begin{equation}\label{eq8.2}
    \begin{split}
        \Delta(S,\nu(J_{\bullet}))
    &= \bigcup_{q=1}^{\infty}\Big(\frac{1}{q}\nu(J_q)+\con(S)\Big)\\
    \Delta(S,\nu(J_{q',\bullet}))
    &= \bigcup_{q=1}^{\infty}\Big(\frac{1}{q}\nu(J_{q',q})+\con(S)\Big).
    \end{split}
\end{equation}On the other hand, we have that
\begin{equation}\label{eq8.3}
    \frac{1}{q}\nu(J_{q',q})
    \subset \frac{1}{q}\nu(J_{qq'})
    \subset \frac{1}{qq'}\nu(J_{qq'}).
\end{equation}Equations (\ref{eq8.2}) (\ref{eq8.3}) together yield that
\begin{equation}\label{eq8.4}
    \Delta(S,\nu(J_{q',\bullet}))
    \subset \bigcup_{q\geqslant 1, q'\mid q}\Big( \frac{1}{q}\nu(J_q)+\con(S)\Big)
    \subset \Delta(S,\nu(J_{\bullet})).
\end{equation}Thus
\begin{equation}\label{eq8.5}
    \vol_{\mathbb{R}^d}(\Delta(S,\nu(J_{q',\bullet}))\cap qH)
    \leqslant \vol_{\mathbb{R}^d}(\Delta(S,\nu(J_{\bullet}))\cap qH)
\end{equation}for all $q$ and $q'$. We also have that
\begin{equation}\label{eq8.6}
    q\nu(J_{q'})+ S = q\nu(J_{q',q})+\nu(R)
    = \{\nu(hf^q)\mid f\in J_{q',1}\text{, }h\in R\}
    \subset \nu(J_{q',1}^{[q]})\subset \nu(J_{q',q})
\end{equation}for all $q$ and $q'$, as in,
\begin{equation*}
    q T_{q'}+S \subset \nu(J_{q',q}).
\end{equation*}Fix $\epsilon>0$. By Theorem \ref{D1thm3.18}, there exists $q_0$ such that for $q'\geqslant q_0$ we have
\begin{equation*}
 \vol_{\mathbb{R}^d}(\Delta(S,\nu(J_{\bullet}))\cap H)-\epsilon
    =\vol_{\mathbb{R}^d}(\Delta(S,T_{\bullet})\cap H)-\epsilon
    \leqslant \lim_{q\to\infty}
    \frac{\#[(qT_{q'}+S)\cap (qq'H)]}{q^d(q')^d}.
\end{equation*}This combined with (\ref{eq8.6}) gives that
\begin{equation*}
    \begin{split}
        \vol_{\mathbb{R}^d}(\Delta(S,\nu(J_{\bullet}))\cap H)-\epsilon
    &\leqslant
    \frac{1}{(q')^d}
    \lim_{q\to\infty}
    \frac{\#[\nu(J_{q',q})\cap q(q'H)]}{q^d}\\
    &=
    \frac{1}{(q')^d}
    \lim_{q\to\infty}
    \frac{\#[\nu^{(1)}(J_{q',q})\cap q(q'H)]}{q^d}
    \end{split}
\end{equation*}for $q'\geqslant q_0$. Then by Lemma \ref{lem-new5.10} and Definition \ref{dfD13.7} we have that
\begin{equation}
    \lim_{q\to\infty}\frac{\#[\nu^{(h)}(J_q)\cap qH]}{q^d} -\epsilon
    = \frac{1}{(q')^d}
    \lim_{q\to\infty}
    \frac{\#[\nu^{(h)}(J_{q',q})\cap qq'H]}{q^d}
\end{equation}for $1\leqslant h\leqslant [k_{\nu}:k]$. Now letting $q'\to\infty$ yields that
\begin{equation}\label{eq8.7}
    \lim_{q\to\infty}\frac{\#[\nu^{(h)}(J_q)\cap qH]}{q^d}
    =\lim_{q'\to\infty} \frac{1}{(q')^d}
    \lim_{q\to\infty}
    \frac{\#[\nu^{(h)}(J_{q',q})\cap qq'H]}{q^d}
\end{equation}for $1\leqslant h\leqslant [k_{\nu}:k]$. Adding the equations (\ref{eq8.7}), $h = 1,\ldots,[k_{\nu}:k]$ gives that
\begin{equation}\label{eq8.8}
    \lim_{q\to\infty}\frac{\displaystyle\sum_{h=1}^{[k_{\nu}:k]}\#[\nu^{(h)}(J_q)\cap qH]}{q^d}
    =\lim_{q'\to\infty} \frac{1}{(q')^d}
    \lim_{q\to\infty}
    \frac{\displaystyle\sum_{h=1}^{[k_{\nu}:k]}\#[\nu^{(h)}(J_{q',q})\cap qq'H]}{q^d}.
\end{equation} 

Repeating the above argument, with $J_q, J_{q,q'}, T_{\bullet}, J_{\bullet}$ and $J_{q',\bullet}$ replaced with $I_q, I_{q,q'}, E_{\bullet}, I_{\bullet}$ and $I_{q',\bullet}$ respectively, yields that

\begin{equation}\label{eq8.9}
    \lim_{q\to\infty}\frac{\displaystyle\sum_{h=1}^{[k_{\nu}:k]}\#[\nu^{(h)}(I_q)\cap qH]}{q^d}
    =\lim_{q'\to\infty} \frac{1}{(q')^d}
    \lim_{q\to\infty}
    \frac{\displaystyle\sum_{h=1}^{[k_{\nu}:k]}\#[\nu^{(h)}(I_{q',q})\cap qq'H]}{q^d}.
\end{equation} Subtracting (\ref{eq8.9}) from (\ref{eq8.8}) and applying equations (\ref{eq7.7}) and (\ref{eq7.8}) yields that

\begin{equation}
    \lim_{q'\to\infty}\frac{1}{(q')^d}\lim_{q\to\infty}\frac{\ell_R(J_{q',q}/I_{q',q})}{q^d}
    =
    \lim_{q\to\infty}\frac{\ell_R(J_q/I_q)}{q^d}.
\end{equation}This completes the proof of Theorem \ref{generalmult} in the case that $R$ is a complete domain.







Now assume that $R$ is only analytically unramified. We will prove Theorem \ref{generalmult} in this case. We may continue to assume that $R$ is complete, and hence reduced. Now we introduce some notation. Let $P_1,\ldots,P_r$ be the minimal primes of $R$. Let $R_i = R/P_i$ for each $i$. Let $T = R_1\oplus \ldots \oplus R_r$ and let $\phi:R\to T$ be the natural inclusion. By the Artin-Rees Lemma, there exists a positive integer $\lambda$ such that
\begin{equation*}
    \omega_n:= \phi^{-1}(m_R^nT) = R\cap (m_R^nT)\subset m_R^{n-\lambda}
\end{equation*}for $n\geqslant \lambda$. Hence, $m_R^n\subset \omega_n\subset m_R^{n-\lambda}$ for $n\geqslant \lambda$. In particular,
\begin{equation*}
    m_R^{mk}\subset \omega_{mk}\subset m_R^{mk-\lambda}
\end{equation*}when $m,k>0$ and $mk\geqslant \lambda$. We also have that
\begin{equation*}
    \omega_n = \phi^{-1}(m_R^nR_1\oplus \cdots \oplus m_R^nR_r)
    = (m_R^n+P_1)\cap \cdots \cap (m_R^n+P_r)
\end{equation*}for all $n$. Let $\beta = (\lambda + 1)c$. We have that $\omega_\beta n\subset m_R^{c(\lambda+1)n-\lambda}\subset m_R^{cn}$ for $n\geqslant 1$. Thus, $\omega_{\beta qq'}\subset m_R^{cqq'}$ for all powers $q$ and $q'$ of $p$. Consequently, we have
\begin{equation*}
    \omega_{\beta qq'}\cap I_{q,q'}
    = \omega_{\beta qq'}\cap (m_R^{cqq'})\cap I_{q,q'})
    = \omega_{\beta qq'}\cap (m_R^{cqq'})\cap J_{q,q'})
    =\omega_{\beta qq'}\cap J_{q,q'}
\end{equation*}for all $q,q'$. In particular, $\omega_{\beta q}\cap I_q=\omega_{\beta q}\cap J_q$ for all $q$. Thus,

\begin{equation}\label{eq7.1}
    \ell_R(J_q/I_q) 
    = \ell_R(J_q/\omega_{\beta q}\cap J_q)
    - \ell_R(I_q/\omega_{\beta q}\cap I_q)
\end{equation} for all $q$ and
\begin{equation}\label{eq7.1'}
    \ell_R(J_{q',q}/I_{q',q})
    = \ell_R(J_{q',q}/\omega_{\beta qq'}\cap J_{q',q})
    - \ell_R(I_{q',q}/\omega_{\beta qq'}\cap I_{q',q})
\end{equation}for all $q,q'$.\\

Define $E_n^0 = R$ and
\begin{equation*}
    E_n^j = (m_R^{\beta n}+P_1)\cap \cdots \cap (m_R^{\beta n}+P_r)
\end{equation*} for $1\leqslant j\leqslant r$ and $n\geqslant 1$. Let
\begin{equation*}
    L_0^j = F_0^j = B_{m,0}^j = A_{m,0}^j = R
\end{equation*}for $0\leqslant j\leqslant r$ and $m\geqslant 1$. Let $L_q^j = E_q^j\cap J_q$, $F_q^j = E_q^j\cap I_q$, $B_{q',q}^j = E_{qq'}^j\cap J_{q',q}$, and $A_{q',q}^j = E_{qq'}^j\cap I_{q',q}$ for $0\leqslant j\leqslant r$ and all $q,q'$. Notice that $B_{q',q}^r = E_{qq'}^r\cap J_{q',q} = \omega_{\beta qq'}\cap J_{q',q}$, $A_{q',q}^r = \omega_{\beta qq'}\cap I_{q',q}$, $L_q^r = \omega_{\beta q}\cap J_q$, and $F_q^r = \omega_{\beta q}\cap I_q$ for all $q, q'$. We have isomorphisms 
\begin{equation}\label{eq7.2}
    L_q^j/L_q^{j+1}
    = L_q^j/[(m_R^{\beta q}+P_{j+1})\cap L_q^j]
    \cong \frac{L_q^j + m_R^{\beta q}+P_{j+1}}{m_R^{\beta q}+P_{j+1}}
    = \frac{(L_q^j + P_{j+1}) + (m_R^{\beta q}+P_{j+1})}{m_R^{\beta q}+P_{j+1}}
\end{equation}so that
\begin{equation*}
\begin{split}
    L_q^jR_{j+1}/[(L_q^jR_{j+1})\cap (m_{R_{j+1}}^{\beta q})]
    &= \frac{(L_q^j+P_{j+1})/P_{j+1}}{[(L_q^j+P_{j+1})/P_{j+1}]\cap [(m_R^{\beta q}+ P_{j+1})/P_{j+1}]}\\
    &= \frac{(L_q^j+P_{j+1})/P_{j+1}}{[(L_q^j+P_{j+1})\cap (m_R^{\beta q}+ P_{j+1})]/P_{j+1}}\\
    &\cong \frac{L_q^j+P_{j+1}}{(L_q^j+P_{j+1})\cap (m_R^{\beta q}+ P_{j+1})}\\
\end{split}
\end{equation*}
and hence
\begin{equation}\label{7.9-1}
    L_q^j/L_q^{j+1}\cong
    L_q^jR_{j+1}/[(L_q^jR_{j+1})\cap (m_{R_{j+1}}^{\beta q})]
\end{equation}for $0\leq j\leq r-1$. Similarly, 
\begin{equation}\label{7.10-1}
    \begin{split}
        F_q^j/F_q^{j+1}&\cong
    F_q^jR_{j+1}/[(F_q^jR_{j+1})\cap (m_{R_{j+1}}^{\beta q})]\\
    B_{q',q}^j/B_{q',q}^j
    &\cong B_{q',q}^jR_{j+1}/
    [(B_{q',q}^jR_{j+1})\cap m_{R_{j+1}}^{\beta qq'}]\\
    A_{q',q}^j/A_{q',q}^j
    &\cong A_{q',q}^jR_{j+1}/
    [(A_{q',q}^jR_{j+1})\cap m_{R_{j+1}}^{\beta qq'}].
    \end{split}
\end{equation}

Consequently, the following equations hold
\begin{equation*}
    \begin{split}
        \ell_R(J_q/\omega_{\beta q}\cap J_q)
        &=\sum_{i=0}^{r-1}\ell_R(L_q^i/L_q^{i+1})\\
        \ell_R(I_q/\omega_{\beta q}\cap I_q)
        &=\sum_{i=0}^{r-1}\ell_R(F_q^i/F_q^{i+1})
    \end{split}
\end{equation*}

\begin{equation*}
\begin{split}
\ell_R(J_{q',q}/\omega_{\beta qq'}\cap J_{q',q})
        &=\sum_{i=0}^{r-1}\ell_R(B_{q',q}^i/B_{q',q}^{i+1})\\
        \ell_R(I_{q',q}/\omega_{\beta qq'}\cap I_{q',q})
        &=\sum_{i=0}^{r-1}\ell_R(A_{q',q}^i/A_{q',q}^{i+1}).
\end{split}
\end{equation*}

Now by (\ref{eq7.1}) and (\ref{eq7.1'}) we have that
\begin{equation}\label{7.11-1}
    \begin{split}
        \ell_R(J_q/I_q) &= \sum_{i=0}^{r-1}(\ell_R(L_q^i/L_q^{i+1})-\ell_R(F_q^i/F_q^{i+1}))\\
    \ell_R(J_{q',q}/I_{q',q}) &= \sum_{i=0}^{r-1}(\ell_{R_i}(\beta_{q',q}^i/\beta_{q',q}^{i+1})-\ell_{R_i}(\alpha_{q',q}^i/\alpha_{q',q}^{i+1})).
    \end{split}
\end{equation}for all $q$.

Equations (\ref{7.9-1}), (\ref{7.10-1}), and (\ref{7.11-1}) together yield that
\begin{equation}\label{eq-main}
    \begin{split}
        \ell_R(J_q/I_q) &=
        \sum_{i=0}^{r-1}
        (\ell_R(L_q^jR_{j+1}/[(L_q^jR_{j+1})\cap (m_{R_{j+1}}^{\beta q})])
        \\&-
        \ell_R(F_q^jR_{j+1}/[(F_q^jR_{j+1})\cap (m_{R_{j+1}}^{\beta q})])\\
        \ell_R(J_{q',q}/I_{q',q}) &=
        \sum_{i=0}^{r-1}
        (\ell_R(\beta_{q',q}^jR_{j+1}/
    [(\beta_{q',q}^jR_{j+1})\cap m_{R_{j+1}}^{\beta qq'}])
        \\&-\ell_R(\alpha_{q',q}^jR_{j+1}/
    [(\alpha_{q',q}^jR_{j+1})\cap m_{R_{j+1}}^{\beta qq'}])).
    \end{split}
\end{equation}


Fix $j\in\{0,\ldots, r-1\}$ and let $\overline{R} = R_{j+1}$. Take $\overline{J}_q = L_q^j\overline{R}$, and $\overline{I}_q = \overline{J}_q\cap m_{\overline{R}}^{\beta q}$, $\overline{J}(q')_q = B_{q',q}^j\overline{R}$, $\overline{I}(q')_q = \overline{J}(q')_q\cap m_{\overline{R}}^{\beta q'q}$. Then $\overline{I}_q\subset \overline{J}_q$  and $\overline{I}(q')_q\subset \overline{J}(q')_q$ for all $q$ and $q'$. Since $R$ is complete, we have that $\overline{R} = R/P_{j+1}$ is a complete domain.

We have families $\overline{I_{\bullet}}(q') := \{\overline{I}(q')_q\}$, $\overline{J_{\bullet}}(q') := \{\overline{J}(q')_q\}$, $\overline{I_{\bullet}}:= \{\overline{I}_q\}$, and $\overline{J_{\bullet}}:=\{\overline{J}_q\}$ of ideals in $\overline{R}$. Now we will prove the following lemma.

\begin{lemma}\label{lem7.1}
    The families of ideals $\overline{I_{\bullet}}(q')$, $\overline{J_{\bullet}}(q')$, $\overline{I_{\bullet}}$, and $\overline{J_{\bullet}}$ are $p$-families in $\overline{R}$ and they satisfy the hypothesis of Theorem \ref{generalmult} and in particular
    \begin{equation}\label{eq7.9}
    \overline{I}(q')_q\cap m_{\overline{R}}^{\beta q'q}
    = \overline{J}(q')_{q}\cap m_{\overline{R}}^{\beta q'q}
\end{equation}for all $q'$ and $q$.
\end{lemma}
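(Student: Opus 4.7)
The plan is to verify in turn the four distinct claims implicit in the lemma: (a) equation~\eqref{eq7.9}, (b) the $p$-family property for each of the four families, (c) the compatibility relations $\overline{J}(q')_1=\overline{J}_{q'}$, $\overline{J}(q')_q\subset\overline{J}_{qq'}$ together with their $I$-analogues, and (d) the $\ash(\overline{R})$-hypothesis on $\overline{I_\bullet}$. Claim (a) is immediate from the definition $\overline{I}(q')_q := \overline{J}(q')_q\cap m_{\overline{R}}^{\beta q'q}$, which makes a second intersection with the same power idempotent; this step also supplies the constant $c := \beta$ required in the hypothesis of Theorem~\ref{generalmult}.

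For (b), I would first use Frobenius additivity in characteristic $p$ to show $(m_R^{\beta q}+P_i)^{[p]}\subset m_R^{\beta pq}+P_i$, giving $(E_q^j)^{[p]}\subset E_{pq}^j$. Combining this with the containment $(A\cap B)^{[p]}\subset A^{[p]}\cap B^{[p]}$ and the $p$-family properties of $I_\bullet, J_\bullet, I_{q',\bullet}, J_{q',\bullet}$ yields $(L_q^j)^{[p]}\subset L_{pq}^j$, and likewise for $F_q^j, B_{q',q}^j, A_{q',q}^j$. Reducing modulo $P_{j+1}$ and intersecting with the appropriate $m_{\overline{R}}$-powers then delivers the $p$-family inclusions for all four families. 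Claim (c) is then pure bookkeeping: from $J_{q',1}=J_{q'}$ one obtains $B_{q',1}^j = E_{q'}^j\cap J_{q'} = L_{q'}^j$ and hence $\overline{J}(q')_1 = \overline{J}_{q'}$, while $J_{q',q}\subset J_{qq'}$ gives $B_{q',q}^j\subset L_{qq'}^j$ and hence $\overline{J}(q')_q\subset\overline{J}_{qq'}$; intersecting with the relevant $m_{\overline{R}}$-powers yields the $\overline{I}$-analogues.

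The main obstacle will be (d). Since $\overline{R}=R/P_{j+1}$ is a complete local domain, $\ash(\overline{R}) = \{(0)\}$ and the condition reduces to $\overline{I_1}=0\Rightarrow\overline{I_q}=0$ for all $q$. If $\dim\overline{R}=0$ this is vacuous; otherwise, any nonzero ideal $A\subset\overline{R}$ satisfies $A\cap m_{\overline{R}}^N\neq 0$ for every $N$ (since $0\neq a\cdot m_{\overline{R}}^N\subset A\cap m_{\overline{R}}^N$ for any nonzero $a\in A$, using that $\overline{R}$ is a domain). Therefore $\overline{I_q}=0\iff \overline{J_q}=0\iff L_q^j\subset P_{j+1}$, and since $E_q^j\supset m_R^{\beta q}\not\subset P_{j+1}$ primality gives $L_q^j\subset P_{j+1}\iff J_q\subset P_{j+1}$. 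The condition thus becomes $J_1\subset P_{j+1}\Rightarrow J_q\subset P_{j+1}$; the subtle point is that the ash hypothesis of Theorem~\ref{generalmult} only asserts $I_q\subset P$, not $J_q\subset P$, so Remark~\ref{clrmk4.2} is precisely what bridges this gap once one notes that $I_1\subset J_1\subset P_{j+1}$.
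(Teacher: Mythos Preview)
Your verifications of (a), (b), and (c) are correct and in fact more thorough than the paper's own proof of the lemma, which only writes out (a) and (c) and leaves the $p$-family property (b) implicit.

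The gap is in (d). Your reduction of the $\ash$-condition to the implication $J_1\subset P_{j+1}\Rightarrow J_q\subset P_{j+1}$ is correct, and your appeal to Remark~\ref{clrmk4.2} is exactly the mechanism the paper uses as well; but Remark~\ref{clrmk4.2} requires $P_{j+1}\in\ash(R)$, i.e.\ $\dim\overline{R}=d$, and you have not checked this. Your case split $\dim\overline{R}=0$ versus $\dim\overline{R}>0$ is not the right one: in the range $0<\dim\overline{R}<d$ the prime $P_{j+1}$ need not lie in $\ash(R)$, and there is then no reason for the implication $J_1\subset P_{j+1}\Rightarrow J_q\subset P_{j+1}$ to hold.

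The paper does not in fact verify the $\ash$-hypothesis inside the lemma at all. Immediately after the lemma it splits into the cases $\dim\overline{R}<d$ and $\dim\overline{R}=d$. In the first case it bypasses the domain case of Theorem~\ref{generalmult} entirely, invoking instead Theorem~5.14 of \cite{HJ} (which only needs the intersection condition~\eqref{eq7.9} you have already established) to show that both sides of the desired formula vanish. In the second case $P_{j+1}\in\ash(R)$ does hold, and the paper runs essentially your argument: $\overline{I}_1=0\Rightarrow\overline{J}_1=0\Rightarrow J_1\subset P_{j+1}$, then Remark~\ref{clrmk4.2} gives $J_q\subset P_{j+1}$, hence $\overline{I}_q=0$. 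So your strategy for (d) is exactly right when $\dim\overline{R}=d$, but for $\dim\overline{R}<d$ one should not try to establish the $\ash$-hypothesis and instead appeal to the separate result from \cite{HJ}.
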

\begin{proof}
We will show that the families $\overline{I_{\bullet}}(q')$, $\overline{J_{\bullet}}(q')$, $\overline{I_{\bullet}}$, and $\overline{J_{\bullet}}$ satisfy the hypothesis of Theorem \ref{generalmult}. Fix $q\geq 1$ we have that
    \begin{equation*}
        \overline{J}(q')_q
        =\beta_{q',q}^j\overline{R}
        =(E_{qq'}^j\cap J_{q',q})\overline{R}
        \subset (E_{qq'}^j\cap J_{qq'}\overline{R}
        = L_{qq'}^j\overline{R} = \overline{J}_{qq'}.
    \end{equation*}Similarly, $\overline{I}(q')_q\subset \overline{I}_{qq'}$. We also have
    \begin{equation*}
        \overline{J}(q')_1 = (E_{q'}^j\cap J(q')_1)\overline{R}
        =(E_{q'}^j\cap J_{q'})\overline{R} = \overline{J}_{q'}
    \end{equation*}and similarly, $\overline{I}(q')_1 = \overline{I}_{q'}$.
By definition we have that $\overline{J}(q')_{q}\cap m_{\overline{R}}^{\beta q'q}=\overline{I}(q')_q$ so that
\begin{equation}\label{eq7.9-2}
    \overline{I}(q')_q\cap m_{\overline{R}}^{\beta q'q}
    = \overline{J}(q')_{q}\cap m_{\overline{R}}^{\beta q'q}
\end{equation}for all $q'$ and $q$.
\end{proof}

We will now prove that
\begin{equation}\label{eq7.10}
    \lim_{q'\to\infty}\lim_{q\to\infty}
    \frac{\ell_{\overline{R}}(\overline{J}(q')_{q}/\overline{I}(q')_{q})}{q^d(q')^d}
    =\lim_{q\to\infty}
    \frac{\ell_{\overline{R}}(\overline{J}_q/\overline{I}_q)}{q^d}.
\end{equation}

Fix $q'\geq 1$. Let $c_0 = \beta q'$. We have by (\ref{eq7.9-2}) that
\begin{equation}\label{eq7.11}
    \overline{I}(q')_q\cap m_{\overline{R}}^{c_0q}
    = \overline{J}(q')_{q}\cap m_{\overline{R}}^{c_0q}
\end{equation}for all $q$. Then since $\overline{J}(q)_{1} = \overline{J}_{q}$ and $\overline{I}(q)_{1} = \overline{I}_{q}$ $q$ we (also by (\ref{eq7.9}) ) have that
\begin{equation}\label{eq7.12}
    \overline{J}_q\cap m_{\overline{R}}^{\beta q}
    =\overline{J}_q\cap m_{\overline{R}}^{\beta q}
\end{equation}for all $q$.\\

Suppose that $\dim(\overline{R})<d$. Then by Theorem 5.14 \cite{HJ} and equations (\ref{eq7.11}) and (\ref{eq7.12}) we have that
\begin{equation*}
    \lim_{q'\to\infty}\frac{1}{(q')^d}\lim_{q\to\infty}
    \frac{\ell_{\overline{R}}(\overline{J}(q')_{q}/\overline{I}(q')_{q})}{q^d}
    =0=\lim_{q\to\infty}
    \frac{\ell_{\overline{R}}(\overline{J}_q/\overline{I}_q)}{q^d}.
\end{equation*}

Suppose that $\dim(\overline{R})=d$. We will prove that if $\overline{I}_1=0$, then $\overline{I}_q=0$ for all $q$. Suppose that $\overline{I}_1=0$. If $\overline{J}_1= \overline{R}$ then
\begin{equation*}
    0 = \overline{I}_1\cap m_{\overline{R}}^{\beta}
    = \overline{J}_1\cap m_{\overline{R}}^{\beta}
    =m_{\overline{R}}^{\beta}
\end{equation*}which contradicts the fact that $\dim(\overline{R})\neq 0$. So $\overline{J}_1\neq \overline{R}$. Consequently, $\overline{J}_1^{\beta}\subset \overline{J}_1\cap m_{\overline{R}}^{\beta} = 0$, which implies that $\overline{J}_1=0$, since $\overline{R}$ is reduced. So $0 = \overline{J}_1 = (L_1^j+P_{j+1})/P_{j+1}$, as in, $L_1^j\subset P_{j+1}$. We now have that
\begin{equation*}
    m_R^{\beta}\cap J_1\subset E_1^j\cap J_1 = L_1^j\subset P_{j+1}
\end{equation*}which implies that $J_1\neq R$ (otherwise $m_R^{\beta}$ is contained in the minimal prime $P_{j+1}$, contradicting the fact that $\dim(R)\neq 0$). Now we have that
\begin{equation*}
    J_1^{\beta}\subset m_R^{\beta}\cap J_1\subset P_{j+1}
\end{equation*}so that $I_1\subset J_1\subset P_{j+1}$. Now by Remark \ref{clrmk4.2} we have that $I_q$, $J_q$, $I_{q',q}$, and $J_{q',q}$ are all contained in $P_{j+1}$ for all $q'$ and $q$. Consequently,
\begin{equation*}
    L_q^j= E_q^j\cap J_q
    \subset P_{j+1}
\end{equation*}so that $\overline{I}_q\subset \overline{J}_q = (L_q^j+P_{j+1})/P_{j+1}=0$. From the case of Theorem \ref{generalmult} when $R$ is a complete $d$-dimensional domain, by replacing $R$, $c$, $I_q$, $J_q$, $I_{q',q}$, and $J_{q',q}$ with $\overline{R}$, $\beta$, $\overline{I}_q$, $\overline{J}_q$, $\overline{I}(q')_q$, and $\overline{J}(q')_q$ respectively, we have that
\begin{equation*}
     \lim_{q'\to\infty}\lim_{q\to\infty}
    \frac{\ell_{\overline{R}}(\overline{J}(q')_{q}/\overline{I}(q')_{q})}{q^d(q')^d}
    =\lim_{q\to\infty}
    \frac{\ell_{\overline{R}}(\overline{J}_q/\overline{I}_q)}{q^d}.
\end{equation*}

That is,
\begin{equation}\label{eq7.13}
    \begin{split}
        \lim_{q'\to\infty}\lim_{q\to\infty}&
    \frac{\ell_{R}(\beta_{q',q}^jR_{j+1}/(\beta_{q',q}^jR_{j+1}\cap m_{R_{j+1}}^{\beta q' q}))}{q^d(q')^d}
    \\&=\lim_{q\to\infty}
    \frac{\ell_R(L_q^jR_{j+1}/(L_q^jR_{j+1}\cap m_{R_{j+1}}^{\beta q}))}{q^d}
    \end{split}
\end{equation}for $j\in \{0,\ldots, r-1\}$. By adding the equations (\ref{eq7.13}) over $j=0,\ldots, r-1$, we see that
\begin{equation}\label{eq7.14}
    \begin{split}
        \lim_{q'\to\infty}\lim_{q\to\infty}&
    \frac{\displaystyle\sum_{j=0}^{r-1}\ell_{R}(\beta_{q',q}^jR_{j+1}/(\beta_{q',q}^jR_{j+1}\cap m_{R_{j+1}}^{\beta q' q}))}{q^d(q')^d}
    \\&=\lim_{q\to\infty}
    \frac{\displaystyle\sum_{j=0}^{r-1}\ell_R(L_q^jR_{j+1}/(L_q^jR_{j+1}\cap m_{R_{j+1}}^{\beta q}))}{q^d}.
    \end{split}
\end{equation}By applying the above argument to $\overline{J}_q := F_q^jR_{j+1}$, $\overline{I}_q:= \overline{J}_q\cap m_{R_{j+1}}^{\beta q}$, $\overline{J}(q')_q:= \alpha_{q',q}^jR_{j+1}$, and $\overline{I}(q')_q:= \overline{J}(q')_q\cap m_{R_{j+1}}^{\beta q'q}$, we have that

\begin{equation}\label{eq7.15}
    \begin{split}
        \lim_{q'\to\infty}\lim_{q\to\infty}&
    \frac{\displaystyle\sum_{j=0}^{r-1}\ell_{R}(\alpha_{q',q}^jR_{j+1}/(\alpha_{q',q}^jR_{j+1}\cap m_{R_{j+1}}^{\beta q' q}))}{q^d(q')^d}
    \\&=\lim_{q\to\infty}
    \frac{\displaystyle\sum_{j=0}^{r-1}\ell_R(F_q^jR_{j+1}/(F_q^jR_{j+1}\cap m_{R_{j+1}}^{\beta q}))}{q^d}.
    \end{split}
\end{equation}

Subtracting the equation (\ref{eq7.15}) from (\ref{eq7.14}) then applying (\ref{eq-main}) we obtain
\begin{equation*}
    \lim_{q'\to\infty}\lim_{q\to\infty}
    \frac{\ell_R(\ell_R(J_{q',q}/I_{q',q})}{q^d(q')^d}
    = \lim_{q\to\infty}
    \frac{\ell_R(J_q/I_q)}{q^d}
\end{equation*}so that
\begin{equation*}
    \lim_{q'\to\infty}\frac{\cal{G}(J_{q',\bullet}, I_{q',\bullet})}{(q')^d}
    = \lim_{q'\to\infty}\frac{1}{(q')^d}\lim_{q\to\infty}
    \frac{\ell_R(J_{q',q}/I_{q',q})}{q^d}
    = \lim_{q\to\infty}
    \frac{\ell_R(J_q/I_q)}{q^d}
    = \cal{F}(I_{\bullet},J_{\bullet}).
\end{equation*}This completes the proof of Theorem \ref{generalmult}.

\section{Applications to generalized Hilbert-Kunz multiplicities}\label{sec7}

In this section we will prove a volume equals multiplicity formula for generalized Hilbert-Kunz multiplicities (Theorem \ref{thm6-1}). Throughout this section, $(R,m)$ will be a Noetherian local ring of prime characteristic $p>0$.

Suppose that $I\subset R$ is an ideal. By \cite{A1}, we have that the limit
\begin{equation*}
    a(I, I^{\sat}) := \lim_{n\to\infty}\frac{\ell_R((I^n)^{\sat}/I^n)}{n^d/d!}
\end{equation*}exists and equals a nonnegative integer. $a(I, I^{\sat})$ is called the \textit{Amao multiplicity}. Amao multiplicities are discussed in \cite{A1}, \cite{R1}, and \cite{SH}. To define the characteristic $p$ analogue of Amao multiplicities, we first need the following remark.

\begin{remark}
    Suppose that $I\subset J$ are ideals in $R$ such that $J/I$ has finite length as an $R$-module. Then $\ell_R(J^{[q]}/I^{[q]})$ has finite length for all powers $q$ of $p$.
\end{remark}
\begin{proof}
    There is a positive integer $c$ such that $m^c(J/I) = 0$. After possibly enlarging $c$, we may assume that $c = p^e$ for some $e\geqslant 1$. Let $\mu :=\mu(m)$ be the embedding dimension of $R$ and let $M = J^{[q]}/I^{[q]}$. We will show that $m^{cq\mu}M= 0$. Let $\alpha\in M$. Write $\alpha = \sum_{i=1}^nr_if_i^{q} +I^{[q]}$, where $f_1,\ldots, f_n\in J$ and $r_1,\ldots,r_n\in R$. Suppose that $y_1,\ldots, y_c\in m$. Then
    \begin{equation*}
        (y_1\ldots y_c)^q\sum_{i=1}^nr_if_i^{q}
        = \sum_{i=1}^nr_i(y_1\ldots y_c f_i)^q\in I^{[q]}
    \end{equation*}so that $(m^c)^{[q]}(J^{[q]}/I^{[q]})=0$. Hence,
    \begin{equation*}
        0 = (m^{[c]})^{[q]}M = m^{[cq]}M
        \supset m^{cq\mu}M
    \end{equation*}which finishes the proof that $M$ has finite length, as $M$ is now an $R/m^{cq\mu}$-module.
\end{proof}

\begin{definition}\label{df-at}
    Let $d = \dim(R)$, and let $I$ and $J$ be ideals $R$ such that $I\subset J$ and $J/I$ has finite length.
    We define the \textit{Amao-type} multiplicity to be the limit superior
    \begin{equation*}
        a_F(I,J):= \limsup_{q\to\infty}\frac{\ell_R(J^{[q]}/I^{[q]})}{q^d}.
    \end{equation*}
    If $I\subset R$ is an $m$-primary ideal, and $J = R$, then $a_F(I,J) = e_{HK}(I)$ is the Hilbert-Kunz multiplicity of $I$.
\end{definition}


\begin{lemma}\label{lem6-2}
    Let $I\subset R$ be an ideal. Then
    \begin{equation*}
        I^{\sat} = I:m^{[q]}
    \end{equation*}for all $q$ sufficiently large.
\end{lemma}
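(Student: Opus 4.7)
My plan is to sandwich the ideal $m^{[q]}$ between two ordinary powers of $m$ and then apply the Noetherian stabilization of the chain $\{I:m^n\}$.

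First, I would invoke the fact that $R$ is Noetherian to conclude that the ascending chain $I:m \subseteq I:m^2 \subseteq \cdots$ stabilizes, so there exists an integer $N$ with $I^{\sat} = I:m^N$ and $I^{\sat} = I:m^n$ for every $n \geq N$.

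Next, I would establish the two-sided containment $m^{\mu q} \subseteq m^{[q]} \subseteq m^q$, where $\mu = \mu(m)$ is the minimal number of generators of $m$. The right containment is immediate: every generator $x^q$ of $m^{[q]}$ with $x \in m$ obviously lies in $m^q$. The left containment is the standard pigeonhole argument: if $m = (x_1,\ldots,x_\mu)$, then any monomial $x_1^{a_1}\cdots x_\mu^{a_\mu}$ of total degree $a_1 + \cdots + a_\mu \geq \mu q$ must have some $a_i \geq q$, so the monomial lies in $m^{[q]}$; since $m^{\mu q}$ is generated by such monomials, the containment follows.

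Applying the order-reversing operation $I : (-)$ to this chain of inclusions yields
\begin{equation*}
    I : m^q \;\subseteq\; I : m^{[q]} \;\subseteq\; I : m^{\mu q}.
\end{equation*}
Finally, I would observe that once $q \geq N$ (which holds for all sufficiently large powers $q = p^e$), we also have $\mu q \geq N$, and by the stabilization from the first step both outer ideals coincide with $I^{\sat}$. The sandwich then forces $I : m^{[q]} = I^{\sat}$ for all sufficiently large $q$.

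There is no real obstacle here; the only mild subtlety is the pigeonhole estimate $m^{\mu q} \subseteq m^{[q]}$, but this is a standard and routine observation in Frobenius-power arguments.
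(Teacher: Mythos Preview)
Your proof is correct and essentially identical to the paper's: both sandwich $m^{[q]}$ between $m^{\mu q}$ and $m^q$ (the paper writes $r$ for your $\mu$), apply $I:(-)$, and invoke Noetherian stabilization of the chain $\{I:m^n\}$ to squeeze $I:m^{[q]}$ to $I^{\sat}$. The only difference is that you spell out the pigeonhole justification for $m^{\mu q}\subseteq m^{[q]}$, which the paper leaves implicit.
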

\begin{proof}
    Let $r$ be the minimal number of generators of $m$. We have that
    \begin{equation*}
        m^{qr}\subset m^{[q]}\subset m^q
    \end{equation*} for all $q$. Thus
    \begin{equation}\label{eq9.1}
        I:m^q\subset I:m^{[q]}\subset I:m^{qr}
    \end{equation}for all $q$. Take $t\in\mathbb{Z}_{>0}$ so large that $I^{\sat} = I:m^{t'}$ for $t'\geqslant t$ and let $q = p^{t}$. Since $q,qr > t$, we have that $I:m^q = I^{\sat} = I:m^{qr}$. Now $I:m^{[q]} = I^{\sat}$ by (\ref{eq9.1}).
\end{proof}

\begin{lemma}\label{lem6-3}
    Let $I\subset R$ be an ideal. We have that
    \begin{equation*}
        (I^{\sat})^{[q]}\subset (I^{[q]})^{\sat}
    \end{equation*}for all $q$.
\end{lemma}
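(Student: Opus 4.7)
The plan is to reduce the statement to a single-element claim: it suffices to show that for every $f \in I^{\sat}$, one has $f^q \in (I^{[q]})^{\sat}$. Once that is in hand, writing $I^{\sat} = (f_1,\ldots,f_s)$ gives $(I^{\sat})^{[q]} = (f_1^q,\ldots,f_s^q)$, and since $(I^{[q]})^{\sat}$ is an ideal containing each $f_j^q$, it must contain $(I^{\sat})^{[q]}$.

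To carry out the single-element claim, I would first use the definition of saturation to pick $N \geq 1$ with $\mathfrak{m}^N f \subset I$. The key manipulation is to raise arbitrary elements of $\mathfrak{m}^N f$ to the $q$-th power: for every $g \in \mathfrak{m}^N$, one has $gf \in I$, and by commutativity
\begin{equation*}
g^q f^q \;=\; (gf)^q \;\in\; I^{[q]}.
\end{equation*}
The ideal generated by $\{g^q : g \in \mathfrak{m}^N\}$ is by definition $(\mathfrak{m}^N)^{[q]}$, so this yields $(\mathfrak{m}^N)^{[q]} \cdot f^q \subset I^{[q]}$. To finish, I would invoke the fact that in the local ring $(R,\mathfrak{m})$, the ideal $(\mathfrak{m}^N)^{[q]}$ is $\mathfrak{m}$-primary (its radical equals $\sqrt{\mathfrak{m}^N} = \mathfrak{m}$), hence contains some power $\mathfrak{m}^M$ of the maximal ideal. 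Combined with the previous containment, this gives $\mathfrak{m}^M f^q \subset I^{[q]}$, i.e.\ $f^q \in I^{[q]} : \mathfrak{m}^\infty = (I^{[q]})^{\sat}$, as required.

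I do not anticipate any serious obstacle: the argument rests only on commutativity, the definition of Frobenius powers, and the standard fact that Frobenius powers of $\mathfrak{m}$-primary ideals remain $\mathfrak{m}$-primary in a local ring. An alternative route would be to apply \autoref{lem6-2} to replace $\mathfrak{m}^N$ by $\mathfrak{m}^{[q_0]}$ for some fixed large $q_0$ and then use $(\mathfrak{m}^{[q_0]})^{[q]} = \mathfrak{m}^{[q_0 q]}$, but the direct saturation argument above seems the most economical.
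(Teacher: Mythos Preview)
Your proof is correct. The paper actually takes what you call the ``alternative route'': it invokes \autoref{lem6-2} to choose a large $q'$ with $I^{\sat} = I : \mathfrak{m}^{[q']}$ and $(I^{[q]})^{\sat} = I^{[q]} : \mathfrak{m}^{[qq']}$, and then uses the containment $(I : \mathfrak{m}^{[q']})^{[q]} \subset I^{[q]} : (\mathfrak{m}^{[q']})^{[q]} = I^{[q]} : \mathfrak{m}^{[qq']}$. Your main argument bypasses \autoref{lem6-2} entirely, instead observing directly that $(\mathfrak{m}^N)^{[q]}$ is $\mathfrak{m}$-primary; this makes your proof self-contained, whereas the paper's version is a line shorter once \autoref{lem6-2} is available. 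Both routes rest on the same underlying fact $(I:J)^{[q]} \subset I^{[q]}:J^{[q]}$, so the difference is purely organizational.
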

\begin{proof}
Fix a power $q$ of $p$. By Lemma \ref{lem6-2} we may find a sufficiently large power $q'$ of $p$ such that
    \begin{equation*}
        I^{\sat}=I:m^{[q']} \text{ and }
        (I^{[q]})^{\sat} = I^{[q]}:m^{[qq']}.
    \end{equation*} Consequently,
    \begin{equation*}
        (I^{\sat})^{[q]}
        \subset I^{[q]}:((m^{[q']})^{[q]})
        = I^{[q]}: m^{[qq']}
        =(I^{[q]})^{\sat}.
    \end{equation*}
\end{proof}

\begin{lemma}\label{lem6-4}
    Suppose that $I\subset R$ is an ideal which satisfies condition $p(c)$ for some $c>0$. Then we have
    \begin{enumerate}
        \item[$(1)$]
        $(I^{\sat})^{[q]}\cap m^{cq}
        = I^{[q]}\cap m^{cq}$ for all $q$, and
        \item[$(2)$]
        $[(I^{[q']})^{\sat}]^{[q]}\cap m^{cqq'}
        = I^{[qq']}\cap m^{cqq'}$ for all $q$ and $q'$.
    \end{enumerate}
\end{lemma}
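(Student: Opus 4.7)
The plan is to derive both equalities by combining Lemma \ref{lem6-3} with the hypothesis $p(c)$. In each case, one inclusion is trivial (coming from $I\subset I^{\sat}$, respectively $I^{[q']}\subset (I^{[q']})^{\sat}$), so essentially all the work lies in the reverse inclusion, where Lemma \ref{lem6-3} lets us absorb the Frobenius powers into the saturation and then invoke $p(c)$.

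For part $(1)$, the easy containment is $I^{[q]}\cap m^{cq}\subset (I^{\sat})^{[q]}\cap m^{cq}$, obtained by taking $q$-th Frobenius powers of $I\subset I^{\sat}$. For the other direction, apply Lemma \ref{lem6-3} to get $(I^{\sat})^{[q]}\subset (I^{[q]})^{\sat}$. Intersecting with $m^{cq}$ and then using the assumption $p(c)$ at the power $q$ yields
\begin{equation*}
    (I^{\sat})^{[q]}\cap m^{cq}
    \subset (I^{[q]})^{\sat}\cap m^{cq}
    = I^{[q]}\cap m^{cq}.
\end{equation*}

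For part $(2)$, the easy containment comes from $I^{[qq']}=(I^{[q']})^{[q]}\subset [(I^{[q']})^{\sat}]^{[q]}$. For the reverse, apply Lemma \ref{lem6-3} to the ideal $I^{[q']}$, obtaining
\begin{equation*}
    [(I^{[q']})^{\sat}]^{[q]}\subset ((I^{[q']})^{[q]})^{\sat} = (I^{[qq']})^{\sat}.
\end{equation*}
Now intersect with $m^{cqq'}$ and apply the hypothesis $p(c)$ at the power $qq'$ (which is again a power of $p$, so $p(c)$ applies directly) to conclude
\begin{equation*}
    [(I^{[q']})^{\sat}]^{[q]}\cap m^{cqq'}
    \subset (I^{[qq']})^{\sat}\cap m^{cqq'}
    = I^{[qq']}\cap m^{cqq'}.
\end{equation*}

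No step looks particularly delicate; the only subtlety worth flagging is the bookkeeping in part $(2)$, where one must apply $p(c)$ at the power $qq'$ rather than $q$ or $q'$ separately, and where Lemma \ref{lem6-3} must be used on $I^{[q']}$ rather than on $I$ itself. Everything else is immediate from the definitions.
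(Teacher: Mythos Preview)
Your proof is correct and follows essentially the same approach as the paper: both argue each equality by combining Lemma~\ref{lem6-3} with the hypothesis $p(c)$, using the trivial inclusion from $I\subset I^{\sat}$ (respectively $I^{[q']}\subset (I^{[q']})^{\sat}$) for one direction and Lemma~\ref{lem6-3} followed by $p(c)$ at the appropriate power for the other. The paper merely presents the chain of inclusions in a single circular display, while you separate out the two directions, but the content is identical.
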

\begin{proof}
    Fix powers $q$ and $q'$ of $p$. First we prove (1). Combining Lemma \ref{lem6-3} and the assumption that $I$ satisfies $p(c)$ yields that
    \begin{equation*}
        \begin{split}
            (I^{[q]})^{\sat}\cap m^{qc}
            = I^{[q]}\cap m^{qc}
            \subset (I^{\sat})^{[q]}\cap m^{qc}
            \subset (I^{[q]})^{\sat}\cap m^{qc}
        \end{split}
    \end{equation*}so that $(I^{\sat})^{[q]}\cap m^{qc} = I^{[q]}\cap m^{qc}$. We now prove (2). We have that
    \begin{equation*}
        \begin{split}
            (I^{[qq']})^{\sat}\cap m^{cqq'}
            &= I^{[qq']}\cap m^{cqq'}
            = (I^{[q']})^{[q]}\cap m^{cqq'}
            \subset [(I^{[q']})^{\sat}]^{[q]}\cap m^{cqq'}
            \\&\subset [(I^{[q']})^{[q]}]^{\sat}]\cap m^{cqq'}
            = (I^{[qq']})^{\sat}\cap m^{cqq'}
        \end{split}
    \end{equation*}so that $[(I^{[q']})^{\sat}]^{[q]}\cap m^{cqq'}
        = I^{[qq']}\cap m^{cqq'}$, as desired.
\end{proof}

We have the following volume equals multiplicity formula for the generalized Hilbert-Kunz multiplicity, which is the generalization of Theorem 5.3 \cite{D1} to arbitrary ideals in an analytically unramified ring which satisfy $p(c)$ for some positive number $c$.


\begin{theorem}\label{thm6-1}
    Let $(R,m)$ be a $d$-dimensional analytically unramified local ring of prime characteristic $p>0$. Let $I\subset R$ be an ideal which satisfies condition $p(c)$ for some $c>0$. Then
    \begin{enumerate}
        \item[$(i)$]For all $q'$, the limit
        \begin{equation*}
            a_F(I^{[q']}, (I^{[q']})^{\sat}) = \lim_{q\to\infty}\frac{\ell_R([(I^{[q']})^{\sat}]^{[q]}/I^{[qq']})}{q^d}
        \end{equation*}exists.
        \item[$(ii)$] The limit
        \begin{equation*}
            \lim_{q'\to\infty}\frac{a_F(I^{[q']}, (I^{[q']})^{\sat})}{(q')^d}
        \end{equation*}exists.
        \item[$(iii)$] We have the following formula
        \begin{equation*}
            e_{_gHK}(I)
            = \lim_{q'\to\infty}\frac{a_F(I^{[q']}, (I^{[q']})^{\sat})}{(q')^d}.
        \end{equation*}
    \end{enumerate}
\end{theorem}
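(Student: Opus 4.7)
The plan is to deduce Theorem \ref{thm6-1} as a direct specialization of the technical Theorem \ref{generalmult} by choosing the outer and inner $p$-families so that $\cal{F}$ recovers the generalized Hilbert--Kunz multiplicity and $\cal{G}$ recovers the Amao-type multiplicity. I would set $I_q := I^{[q]}$ and $J_q := (I^{[q]})^{\sat}$; these are $p$-families with $J_q/I_q = H^0_m(R/I^{[q]})$, so that $\cal{F}(I_{\bullet}, J_{\bullet})$ is precisely $e_{_gHK}(I)$ once its existence as a genuine limit (and not merely a limsup) is established. For each $q' = p^e$, I would take the inner families $I_{q',q} := I^{[qq']}$ and $J_{q',q} := [(I^{[q']})^{\sat}]^{[q]}$. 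Their $p$-family structure is immediate from $(-)^{[p]}\circ(-)^{[q]} = (-)^{[pq]}$, and at $q=1$ they recover $I_{q'}$ and $J_{q'}$. With these choices,
\begin{equation*}
    \cal{G}(J_{q',\bullet}, I_{q',\bullet})
    = \lim_{q\to\infty}\frac{\ell_R([(I^{[q']})^{\sat}]^{[q]}/I^{[qq']})}{q^d}
\end{equation*}
is exactly $a_F(I^{[q']}, (I^{[q']})^{\sat})$, now promoted from a limsup to a limit.

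Next I would verify the remaining hypotheses of Theorem \ref{generalmult}. The inclusion $I_{q',q}\subset I_{qq'}$ is the equality $I^{[qq']} = I^{[qq']}$, and $J_{q',q}\subset J_{qq'}$ follows from Lemma \ref{lem6-3} since
\begin{equation*}
    [(I^{[q']})^{\sat}]^{[q]}\subset ((I^{[q']})^{[q]})^{\sat} = (I^{[qq']})^{\sat}.
\end{equation*}
The containment $I_{q',q}\subset J_{q',q}$ is clear from $I^{[q']}\subset (I^{[q']})^{\sat}$. The minimal-prime hypothesis is trivial here: if $P\in\ash(R)$ contains $I_1 = I$, then $P\supset I^q\supset I^{[q]}$ for every $q$, so the saturation condition on $I_1$ propagates automatically to every $I_q$.

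The one substantive input is the intersection condition
\begin{equation*}
    I_{q',q}\cap m^{cqq'} = J_{q',q}\cap m^{cqq'},
\end{equation*}
which in our notation reads $I^{[qq']}\cap m^{cqq'} = [(I^{[q']})^{\sat}]^{[q]}\cap m^{cqq'}$. This is precisely the content of Lemma \ref{lem6-4}(2), and is the reason the preparatory Lemmas \ref{lem6-2}--\ref{lem6-4} were set up: they translate the $p(c)$ hypothesis on $I$ into the hypothesis needed by Theorem \ref{generalmult}. This is the only step where the assumption that $I$ satisfies $p(c)$ is really used, and it is the main (indeed the only) obstacle in the reduction.

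With every hypothesis of Theorem \ref{generalmult} verified, its conclusion applied to the families above produces simultaneously the existence of the limit defining $\cal{G}(J_{q',\bullet}, I_{q',\bullet})$ for each $q'$ (which is part (i)), the existence of the limit $\lim_{q'\to\infty}\cal{G}(J_{q',\bullet}, I_{q',\bullet})/(q')^d$ (which is part (ii)), and the identification
\begin{equation*}
    e_{_gHK}(I) = \cal{F}(I_{\bullet}, J_{\bullet}) = \lim_{q'\to\infty}\frac{\cal{G}(J_{q',\bullet}, I_{q',\bullet})}{(q')^d} = \lim_{q'\to\infty}\frac{a_F(I^{[q']}, (I^{[q']})^{\sat})}{(q')^d},
\end{equation*}
which is part (iii). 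Everything beyond the application of Lemma \ref{lem6-4}(2) is routine bookkeeping.
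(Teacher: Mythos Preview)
Your proposal is correct and mirrors the paper's own proof essentially line for line: the paper defines the same four families $I_q = I^{[q]}$, $J_q = (I^{[q]})^{\sat}$, $I_{q',q} = I^{[qq']}$, $J_{q',q} = [(I^{[q']})^{\sat}]^{[q]}$, invokes Lemma~\ref{lem6-3} for the $p$-family and containment properties, invokes Lemma~\ref{lem6-4} for the intersection hypothesis, and then appeals to Theorem~\ref{generalmult}. Your explicit check of the $\ash(R)$ hypothesis is a detail the paper leaves implicit, but otherwise the arguments are identical.
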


\begin{proof} First note that if $F\subset R$ is an ideal then $\{F^{[q]}\}$ is a $p$-family. Suppose that $\{F_q\}$ is a $p$-family. We will show that $\{(F_q)^{\sat}\}$ is a $p$-family. By Lemma \ref{lem6-3} we have that
\begin{equation*}
    [(F_q)^{\sat}]^{[p]}
    \subset (F_q^{[p]})^{\sat}
    \subset F_{qp}^{\sat},
\end{equation*}which shows that $\{(F_q)^{\sat}\}$ is a $p$-family. For all powers $q'$ and $q$ of $p$, let $J_{q',q} = [(I^{[q']})^{\sat}]^{[q]}$, $I_{q',q} = I^{[q'q]}$, $J_q = (I^{[q]})^{\sat}$, and $I_q = I^{[q]}$. Then $J_{q',\bullet}: = \{J_{q',q}\}$, $I_{q',\bullet}: = \{I_{q',q}\}$ are $p$-families in $R$ for all $q'$.

We will show that the families $J_{q',\bullet}$, $I_{q',\bullet}$, $J_{\bullet}$, and $I_{\bullet}$ satisfy the hypothesis of Theorem \ref{generalmult}. By Lemma \ref{lem6-4}, we have that
\begin{equation*}
    J_{q',q}\cap m^{cqq'}
    = I_{q',q}\cap m^{cqq'}
\end{equation*}for all $q$ and $q'$. By construction, we have that $J_{q',q} = J_{q'}$, $I_{q',1} = I_{q'}$, $J_{q',q}\subset J_{q'q}$, and $I_{q',q}\subset I_{q'q}$ for all $q$ and $q'$. The conclusions of the theorem now follow from Theorem \ref{generalmult}.
\end{proof}




\begin{definition}
    Let $R$ be a Noetherian local ring of prime characteristic $p>0$. Let $I_{\bullet} = \{I_q\}$ be a $p$-family in $R$. Call the limit superior
    \begin{equation*}
        e_{_gHK}(I_{\bullet})
        = \limsup_{q\to\infty}\frac{\ell_R(I_q^{\sat}/I_q)}{q^d}
    \end{equation*}the \textit{generalized Hilbert-Kunz multiplicity} of the family $\{I_q\}$.
\end{definition}
We see that if $I_{\bullet} = \{I^{[q]}\}$ is a Frobenius system of an ideal $I\subset R$, then $e_{_gHK}(I_{\bullet}) = e_{_gHK}(I)$ is the generalized Hilbert-Kunz multiplicity of the ideal $I$.


\begin{remark}\label{lem6-5}
    Suppose that $R$ is a regular local ring of the prime characteristic $p>0$, and let $q = p^e$ for some $e\in\mathbb{N}$. Then for any ideals $I$ and $J$ in $R$, we have that
    \begin{equation*}
        (I\cap J)^{[q]}
        =(I^{[q]})\cap (J^{[q]}).
    \end{equation*}
\end{remark}
Now we extend Theorem \ref{thm6-1} to generalized Hilbert-Kunz multiplicities of $p$-families of ideals for regular local rings.

\begin{theorem}\label{thm6-2}
    Let $(R,m)$ be a $d$-dimensional regular local ring of prime characteristic $p>0$. Let $I_{\bullet} = \{I_q\}$ be a $p$-family in $R$ which satisfies condition $p(c)$ for some $c>0$. Suppose that for $P\in\ash(R)$ such that $P\supset I_1$, we have $P\supset I_q$ for all $q$. Then
    \begin{enumerate}
        \item[$(i)$]For all $q'$, the limit
        \begin{equation*}
            a_F(I_{q'}, (I_{q'})^{\sat}) = \lim_{q\to\infty}\frac{\ell_R([(I_{q'})^{\sat}]^{[q]}/(I_{q'}^{[q]}))}{q^d}
        \end{equation*}exists.
        \item[$(ii)$] The limit
        \begin{equation*}
            \lim_{q'\to\infty}\frac{a_F(I_{q'}, (I_{q'})^{\sat})}{(q')^d}
        \end{equation*}exists.
        \item[$(iii)$] We have the following formula
        \begin{equation*}
            e_{_gHK}(I_{\bullet})
            = \lim_{q'\to\infty}\frac{a_F(I_{q'}, (I_{q'})^{\sat})}{(q')^d}.
        \end{equation*}
    \end{enumerate}
\end{theorem}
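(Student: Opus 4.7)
The plan is to deduce this theorem from the general technical Theorem \ref{generalmult} by choosing appropriate auxiliary families. Set $J_{\bullet} := \{I_q^{\sat}\}$, and for each $q' = p^{e'}$, set $I_{q',q} := I_{q'}^{[q]}$ and $J_{q',q} := [I_{q'}^{\sat}]^{[q]}$. With these choices, $\cal{F}(I_{\bullet}, J_{\bullet}) = \lim_{q \to \infty} \ell_R(I_q^{\sat}/I_q)/q^d$ will coincide with the generalized Hilbert-Kunz multiplicity $e_{gHK}(I_{\bullet})$ (once the $\limsup$ is shown to be a limit, which is part of the conclusion of Theorem \ref{generalmult}), while $\cal{G}(J_{q', \bullet}, I_{q', \bullet}) = \lim_{q \to \infty} \ell_R([I_{q'}^{\sat}]^{[q]}/I_{q'}^{[q]})/q^d = a_F(I_{q'}, I_{q'}^{\sat})$. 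Hence the three conclusions of Theorem \ref{generalmult} specialize directly to parts $(i)$, $(ii)$, $(iii)$.

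The routine hypotheses of Theorem \ref{generalmult} are easy to check. The ring $R$ is regular, hence analytically unramified; $J_{\bullet} = \{I_q^{\sat}\}$ is a $p$-family by Lemma \ref{lem6-3}, since $(I_q^{\sat})^{[p]} \subset (I_q^{[p]})^{\sat} \subset I_{pq}^{\sat}$; each family $\{I_{q',q}\}_q$ and $\{J_{q',q}\}_q$ is a Frobenius system, hence a $p$-family in $q$; the boundary values $I_{q',1} = I_{q'}$ and $J_{q',1} = I_{q'}^{\sat} = J_{q'}$ are built into the definitions; the nesting $I_{q',q} = I_{q'}^{[q]} \subset I_{qq'}$ follows by iterating the $p$-family condition on $I_{\bullet}$, and $J_{q',q} = [I_{q'}^{\sat}]^{[q]} \subset (I_{q'}^{[q]})^{\sat} \subset I_{qq'}^{\sat}$ combines Lemma \ref{lem6-3} with monotonicity of saturation; and the $\ash(R)$ hypothesis on $I_{\bullet}$ is given.

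The main obstacle is verifying the uniform intersection condition $I_{q',q} \cap m^{c'qq'} = J_{q',q} \cap m^{c'qq'}$ for some positive constant $c'$ depending only on $c$ and $d$. The strategy of Lemma \ref{lem6-4}$(2)$ does not transfer verbatim to the $p$-family setting, because $I_{q'}^{[q]}$ is only contained in, not equal to, $I_{qq'}$, so the chain of inclusions used there breaks down. This is precisely where the regularity hypothesis enters crucially (it was not needed in Theorem \ref{thm6-1}). Starting from the $p(c)$ condition at level $q'$, namely $I_{q'} \cap m^{cq'} = I_{q'}^{\sat} \cap m^{cq'}$, I apply Remark \ref{lem6-5} (Frobenius commutes with intersections in a regular local ring) to obtain
\[
I_{q'}^{[q]} \cap (m^{cq'})^{[q]} = (I_{q'}^{\sat})^{[q]} \cap (m^{cq'})^{[q]}.
\]
I then invoke the elementary fact that in a $d$-dimensional regular local ring $(R,m)$ one has $m^{qk + d(q-1)} \subset (m^k)^{[q]}$ for all positive integers $k$ and all $q = p^e$ (proved by writing each monomial exponent as $\beta_i = q\alpha_i + r_i$ with $0 \le r_i < q$ and observing $|\alpha| \ge k$). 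Taking $k = cq'$ gives $m^{(c+d)qq'} \subset (m^{cq'})^{[q]}$; intersecting the previous display with $m^{(c+d)qq'}$ then absorbs the $(m^{cq'})^{[q]}$ factor on both sides and produces
\[
I_{q'}^{[q]} \cap m^{(c+d)qq'} = (I_{q'}^{\sat})^{[q]} \cap m^{(c+d)qq'}
\]
for all $q, q'$. With $c' := c + d$, all hypotheses of Theorem \ref{generalmult} are satisfied, and its three conclusions yield exactly parts $(i)$, $(ii)$, $(iii)$ of Theorem \ref{thm6-2}.
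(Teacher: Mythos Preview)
Your proposal is correct and follows essentially the same approach as the paper: both set up the auxiliary families $J_q = I_q^{\sat}$, $I_{q',q} = I_{q'}^{[q]}$, $J_{q',q} = (I_{q'}^{\sat})^{[q]}$, use Remark~\ref{lem6-5} (regularity $\Rightarrow$ Frobenius commutes with intersections) to pass from the $p(c)$ condition at level $q'$ to an identity involving $(m^{cq'})^{[q]}$, and then find a power $m^{c'qq'}$ contained in $(m^{cq'})^{[q]}$ to feed into Theorem~\ref{generalmult}. The only differences are cosmetic: the paper first reduces to the complete case (which is harmless but not actually needed, since regular local rings are already analytically unramified and Theorem~\ref{generalmult} handles the completion step internally), and the paper uses the pigeonhole bound $m^{\mu c qq'} \subset (m^{cq'})^{[q]}$ with $\mu = \mu(m)$ (which equals $d$ here) in place of your sharper bound $m^{(c+d)qq'} \subset (m^{cq'})^{[q]}$.
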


\begin{proof}
    We begin by reducing to the case where $R$ is complete. For any ideal $J\subset R$, we may find $t>0$ sufficiently large so that $J^{\sat} = J:_Rm^t$ and\\ $(J\widehat{R})^{\sat} = J\widehat{R}:_{\widehat{R}}(m^t\widehat{R})$. By flatness of $R\to\widehat{R}$,
    \begin{equation*}
        J^{\sat}\widehat{R}
        = (J: m^t)\widehat{R}
        = J\widehat{R}: m^t\widehat{R}
        = (J\widehat{R})^{\sat}.
    \end{equation*} Consequently, we have that
    \begin{equation}
        \begin{split}
            \ell_R((I_{q'}^{\sat}))^{[q]}/I_{q'}^{[q]})
            = \ell_{\widehat{R}}
            ([(I_{q'}\widehat{R})^{\sat}]^{[q]}/(I_{q'}\widehat{R})^{[q]})
        \end{split}
    \end{equation}and
    \begin{equation}\label{eq9.3}
        \ell_R(I_q^{\sat}/I_q)
        = \ell_{\widehat{R}}((I_q\widehat{R})^{\sat}/I_q\widehat{R})
    \end{equation}for all $q$ and $q'$. At the begining of the proof of Theorem \ref{generalmult}, we showed that if $Q\in\ash(\widehat{R})$ and $I_1\widehat{R}\subset Q$, then $I_q\widehat{R}\subset Q$ for all $q$. Thus by replacing $\{I_q\}$ and $R$ with $\{I_q\widehat{R}\}$ and $\widehat{R}$, in Theorem \ref{thm6-2}, we may assume that $R$ is complete, and hence reduced.

    Let $J_{q} = I_q^{\sat}$, $J_{q',q} = (I_{q'}^{\sat})^{[q]}$, and $I_{q',q} = I_{q'}^{[q]}$ for all $q$ and $q'$. Let $\mu = \mu(m)$ be the minimal number of generators of $m$, and let $c_1 = \mu c$. Now we will show that
    \begin{equation}
        J_{q',q}\cap m^{c_1qq'} = I_{q',q}\cap m^{c_1qq'}
    \end{equation} for all $q$ and $q'$. Since $\{I_q\}$ satisfies $p(c)$, we have that
    \begin{equation*}
        (I_{q'}^{\sat}\cap m^{cq'})^{[q]}
         = (I_{q'}\cap m^{cq'})^{[q]}.
    \end{equation*}for all $q$ and $q'$. By Lemma \ref{lem6-5} we have that
    \begin{equation}\label{eq9.2}
        [(I_{q'})^{\sat}]^{[q]}\cap ( [m^{cq'}]^{[q]})
        = I_{q'}^{[q]} \cap ( [m^{cq'}]^{[q]}).
    \end{equation} On the other hand, we have
    \begin{equation*}
        m^{c_1qq'}=m^{\mu cqq'}\subset m^{[cqq']}
        = (m^{[cq']})^{[q]}
        \subset (m^{cq'})^{[q]}.
    \end{equation*}This combined with (\ref{eq9.2}) yields that
    \begin{equation*}
        [(I_{q'})^{\sat}]^{[q]}\cap m^{c_1qq'}
        = I_{q'}^{[q]} \cap m^{c_1qq'}.
    \end{equation*}As in
    \begin{equation*}
        J_{q',q}\cap m^{c_1qq'}
        = I_{q',q}\cap m^{c_1qq'}.
    \end{equation*}
    Now the conclusions of Theorem \ref{thm6-2} follow from Theorem \ref{generalmult}.
\end{proof}

Now we discuss a graded version of the condition $p(c)$. In Remark \ref{rmk6.10} we will discuss the relationship between this condition and the $p(c)$ condition of ideals in local rings.

\begin{definition}\label{grpc}
    Let $A = \displaystyle \bigoplus_{i\geqslant 0} A_i$ be a graded ring of prime characteristic $p>0$, $J\subset A$ an ideal. Let $A_+ = \displaystyle \bigoplus_{i> 0} A_i$. Define the saturation of $J$ to be
    \begin{equation*}
        J^{\sat}:= J:A_+^{\infty} = \cup_{i>0} J:A_+^{i}.
    \end{equation*}
    If $c$ is a positive integer, then we will say that $J$ satisfies $p(c)$ if $(J^{[q]})^{\sat}\cap A_+^{qc}=J^{[q]}\cap A_+^{qc}$ for all $q=p^e\geqslant 1$.
\end{definition} 

\begin{remark}\label{rmk6.10}
    Let $k$ be a field of characteristic $p>0$, take $A = k[x_1,\ldots,x_n]$ and let $J\subset A$ be an ideal. Let $m = A_+$ ($=(x_1,\ldots,x_n)$) and let $I = J_m$. If $c>0$ and $J$ satisfies $p(c)$, then $I$ satisfies $p(c)$ as an ideal in the local ring $R := A_m$.
\end{remark}
\begin{proof}
    Let $m_R$ be the maximal ideal of $R$, so that $m_R = m_m$. We have $(J^{[q]})_{m} = (J_m)^{[q]}$, so that we may write $J^{[q]}_m$ without risk of confusion. Fix $q = p^e\geqslant 1$. By assumption, we have $(J^{[q]})^{\sat}\cap m^{cq}\subset J^{[q]}$ so that
    \begin{equation}\label{eqex1}
        ((J^{[q]})^{\sat}\cap m^{cq})_{m}\subset J^{[q]}_{m}
    \end{equation} We have $((J^{[q]})^{\sat}\cap m^{cq})_{m} = ((J^{[q]})^{\sat})_{m}\cap m^{cq}_m = ((J^{[q]})^{\sat})_{m} \cap m_R^{cq}$. Next we will show that $((J^{[q]})^{\sat})_m = (J^{[q]}_m)^{\sat}$. Since $R$ is Noetherian, we have for $t>>0$ that
    \begin{equation*}
        (J^{[q]})^{\sat} = J^{[q]}:m^t\text{ and } (J^{[q]}_m)^{\sat} = J^{[q]}_m : (m^t_m).
    \end{equation*} Thus,
    \begin{equation*}
        \begin{split}
            ((J^{[q]})^{\sat})_m
            &= \{y/s\mid y\in A\text{, }ym^t\subset J^{[q]}\text{, }s\in A\setminus m\}
            \\&= \{y/s\mid y\in A,\text{, }s\in A\setminus m\text{, }(y/s)m^t_m\subset J^{[q]}_m\}
            = (J^{[q]}_m)^{\sat}.
        \end{split}
    \end{equation*}Combining this with (\ref{eqex1}) yields that
    \begin{equation*}
        (I^{[q]})^{\sat}\cap m_R^{cq}\subset I^{[q]}
    \end{equation*}and hence
    \begin{equation*}
        (I^{[q]})^{\sat}\cap m_R^{cq} = I^{[q]}\cap m_R^{cq}
    \end{equation*}so that $I$ satisfies $p(c)$.
\end{proof}


\begin{example}\label{ex6.11}
    Let $k$ be a field of characteristic $p>0$, and let $A = k[x,y,z]$. Let $J = (x^p,xyz,y^p)$. We will show that $J$ satisfies $p(c)$, where $c:= 4p$. We have $J^{[q]} = (x^{pq},x^qy^qz^q,y^{pq})$. Consequently, we see that
    \begin{equation*}
        J^{[q]} = (x^{pq},x^qy^q,y^{pq})
        \cap (x^{pq},z^q,y^{pq})
    \end{equation*}which is an irredundant primary decomposition (since the radicals of the ideals on the right hand side are $(x,y)$ and $(x,y,z) = m_R$). Thus,
    \begin{equation*}
        (J^{[q]})^{\sat}
         = (x^{pq},x^qy^q,y^{pq}).
    \end{equation*} Now we must show that $(x^{pq},x^qy^q,y^{pq})\cap m^{cq}\subset J^{[q]}$, which will show that $J$ satisfies $p(c)$. Let $\alpha \in (x^{pq},x^qy^q,y^{pq})\cap m^{4pq}$ and write $\alpha = x^{pq}f+x^qy^qg+y^{pq}h$, where $f,g,h\in A$. Since $x^{pq},y^{pq}\in J^{[q]}, $we have $\alpha \in J^{[q]}$ iff $x^qy^qg\in J^{[q]}$. Write $g = g_1+\cdots+g_r$ where $g_1,\ldots,g_r$ are monomials. Fix $i\in\{1,\ldots,r\}$. Since $m^{4pq}$ is a monomial ideal, we have $x^qy^qg_i\in m^{4pq}$ so that $\deg(g_i)\geqslant 4pq-2q\geqslant 4pq-pq\geqslant 3pq$. Thus $g_i\in m^{3pq}$, so that one of $x^{pq},y^{pq},z^{pq}$ divides $g_i$. Consequently, $x^qy^qg_i\in (x^{pq},y{pq},x^qy^qz^{pq})\subset J^{[q]}$. Since this holds for all $i$, we now have $x^qy^qz^q\in J^{[q]}$ so that $\alpha \in J^{[q]}$. Thus, $(J^{[q]})^{\sat}\cap m^{4pq}\subset J^{[q]}$, so that $(J^{[q]})^{\sat}\cap m^{cq} = J^{[q]}\cap m^{cq}$. Therefore $J$ satisfies $p(c)$. Then by Remark \ref{rmk6.10}, $I: = J_{(x,y,z)}$ satisfies $p(c)$ so that the conclusions of Theorem \ref{thm6-1} hold for $I$.
 \end{example}

\begin{example}\label{ex6.12}
Let $A$, $m$, and $R$ be as in Example \ref{ex6.11}, and suppose that $I = J\cap \mathfrak{a}$, where $\mathfrak{a}$ is $m_R$-primary, and $J$ is $(x,y)$-primary. Then $I_m$ satisfies $p(l\mu)$ where $l$ is any positive integer such that $\mathfrak{a}\supset m^l$ and $\mu:= \mu(\mathfrak{a})$.
\end{example}

\begin{proof}
    Let $l$ be a positive integer such that $\mathfrak{a}\supset m^l$. Since $A$ is a regular ring, the map $A\to A^{1/q}$ is faithfully flat for all $q$. Then the proof of Lemma \ref{lem6-5} shows that
    \begin{equation}\label{eq 41-1}
        I^{[q]} = \mathfrak{a}^{[q]}\cap J^{[q]}.
    \end{equation}for all $q$. By hypothesis, $J^{[q]}$ is $(x,y)$-primary and $\mathfrak{a}^{[q]}$ is $m$-primary. Thus the equation (\ref{eq 41-1}) is an irredundant primary decomposition of $I^{[q]}$. Thus, $(I^{[q]})^{\sat} = J^{[q]}$. Since $\mathfrak{a}\supset m^l$, we have $\mathfrak{a}^{[q]}\supset \mathfrak{a}^{\mu q}\supset m^{\mu l q}$. Let $c = l\mu$. Then
    \begin{equation*}
        (I^{[q]})^{\sat}\cap m^{cq}
        \subset J^{[q]}\cap \mathfrak{a}^{[q]}
        = I^{[q]}
    \end{equation*}so that
    \begin{equation*}
        (I^{[q]})^{\sat}\cap m^{cq}
        = I^{[q]}\cap m^{cq}.
    \end{equation*}Therefore $I$ satisfies $p(c)$ in the graded ring $A$. Then by Remark \ref{rmk6.10}, $I_m$ satisfies $p(c)$ in the ring $A_m = R$.
\end{proof}

\section{Application to BBL weakly $p$-families}\label{sec8}

In this section, we explore some additional applications of Theorem \ref{generalmult} in Theorem \ref{thm7-1} for bounded below linearly weakly $p$-families. For more information on these types of families, we refer  \cite{DC,Tucker}.

\begin{definition}(Definition 4.5 \cite{DC})
    Let $R$ be a local ring and let $I_{\bullet}=\{I_n\}$ be a family of ideals. We say that $I_{\bullet}$ is \textit{bounded below linearly}, or BBL for short, if either (i) $I_{\bullet}$ is indexed by the natural numbers and there exists $c>0$ such that $m^{cn}\subset I_n$ for all $n$, or (ii) $\text{char}(R)$ is a prime $p>0$, $I_{\bullet}$ is indexed by the natural powers of $p$, and there exits $c>0$ such that $m^{cq}\subset I_q$ for all natural powers $q$ of $p$.
\end{definition}

\begin{definition}\label{df-weakp} (Definition 4.4 \cite{DC})
    Let $R$ be a Noetherian local ring of characteristic $p>0$ and let $I_{\bullet}=\{I_q\}$ be a family of ideals indexed by the natural powers of $p$. We say that $I_{\bullet}$ is a \textit{weakly $p$-family} of ideals if there exists $c\in R^\circ$ such that for any $q = p^e$ we have that $cI_q^{[p]}\subset I_{pq}$.
\end{definition}
\begin{example} \label{weakly_p_example}
For two ideals $I,J \subset R$,  if $J \cap R^\circ \neq \emptyset$, then $I^{[q]}:J$ is a weakly $p$-family of ideals.   
\end{example}
\begin{remark}
We see that if $I_{\bullet}$ is a weakly $p$-family, and $c\in R^{\circ}$ is as in Definition \ref{df-weakp} and $J_q:= cI_q$ for all $q$, then for all $q=p^e$ with $e\geqslant 1$ we have 
\begin{equation*}
    J_q^{[p]} = (cI_q)^{[p]}=c^pI_q^{[p]}\subset c^2I_q^{[p]}\subset cI_{pq} = J_{pq}.
\end{equation*} Hence $\{J_q = cI_q\}$ is a $p$-family.
\end{remark}
\begin{theorem}\label{thm7-1}
    Let $(R,\mathfrak{m})$ be a $d$-dimensional Noetherian local ring of characteristic $p>0$. Let $I_{\bullet} = \{I_q\}$ be a BBL weakly $p$-family of $\mathfrak{m}$-primary ideals in $R$. Suppose that the dimension of the nilradical of $\widehat{R}$ is less than $d$. Then we have the following.
    
    \begin{enumerate}
    \item[$(1)$]
        $\lim_{q\to\infty}\frac{\ell_R(R/I_q)}{q^d}$
    exists.
    
    \item[$(2)$]
    $\lim_{q\to\infty}\frac{\ell_R(R/I_q)}{q^d} = \lim_{q\to\infty}\frac{e_{HK}(I_q)}{q^d}$.
    \end{enumerate}
\end{theorem}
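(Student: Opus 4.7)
The plan is to apply (an adaptation of) Theorem \ref{generalmult} with outer families $J_q = R$ and $I_q$ equal to the given BBL weakly $p$-family, and inner families $J_{q',q} = R$, $I_{q',q} = I_{q'}^{[q]}$, so that $\cal{F}(I_{\bullet},J_{\bullet}) = \lim_q \ell_R(R/I_q)/q^d$ gives part $(1)$ and $\cal{G}(J_{q',\bullet}, I_{q',\bullet})/(q')^d = e_{HK}(I_{q'})/(q')^d$ gives part $(2)$. The required $p(c)$-type condition $I_{q',q}\cap m^{c'qq'} = J_{q',q}\cap m^{c'qq'}$ is automatic from the BBL hypothesis $m^{cq}\subset I_q$ after choosing $c' = c\mu(m)$, since $m^{c\mu qq'}\subset (m^{cq'})^{[q]}\subset I_{q'}^{[q]}$. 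The technical obstruction is that $\{I_q\}$ is only a weakly $p$-family and one only has $c^{(q-1)/(p-1)} I_{q'}^{[q]}\subset I_{qq'}$ rather than the strict $p$-family inclusions that Theorem \ref{generalmult} as stated requires.

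I would first reduce to the complete domain case. Completion is standard from faithful flatness of $R\to\hat R$. To reduce modulo the nilradical $N$ of $\hat R$, one uses the hypothesis $\dim N < d$: from the exact sequence
\begin{equation*}
0\to N/(N\cap I_q\hat R)\to \hat R/I_q\hat R\to (\hat R/N)/I_q(\hat R/N)\to 0
\end{equation*}
and the BBL bound $m^{cq}\subset I_q$, Hilbert--Samuel applied to the module $N$ of dimension $<d$ gives $\ell_R(N/(N\cap I_q\hat R))\leq \ell_R(N/m^{cq}N) = O(q^{\dim N}) = o(q^d)$. For Hilbert--Kunz, additivity on short exact sequences combined with the vanishing of $e_{HK}(-,M)$ for $\dim M < d$ shows $e_{HK}(I_q,\hat R) = e_{HK}(I_q,\hat R/N)$, so both quantities in the theorem are unchanged modulo $o(q^d)$ errors. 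Finally, in the complete reduced case, an Artin--Rees argument parallel to the second half of the proof of Theorem \ref{generalmult} reduces to the case where $R$ is a complete domain.

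In the complete domain case $c\in R^\circ$ is a non-zero-divisor. I would rerun the OK-valuation argument of Theorem \ref{generalmult} with the families above, using Lemma \ref{lem-new5.10} and Theorem \ref{D1thm3.18} to express $\ell_R(R/I_q)/q^d$ and $\ell_R(R/I_{q'}^{[q]})/(qq')^d$ as volumes of $p$-body slices. The hard part will be handling the weakly $p$-family perturbation: the condition $cI_q^{[p]}\subset I_{pq}$ translates into $p\nu(I_q) + \{\nu(c)\} + \nu(R) \subset \nu(I_{pq})$, which fails the strict $p$-system requirement by a fixed shift $\nu(c)$. After $e$ iterations this accumulates to a shift of size $\nu(c)(p^e-1)/(p-1) = O(q)$, so the normalized shift $\nu(c)(q-1)/(q(p-1))$ is bounded independent of $q$ and contributes zero after the $1/q^d$ normalization. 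The crux is then to re-prove the relevant volume convergence results (Lemma \ref{lem-new5.10} and Theorem \ref{D1thm3.18}) for such translated $p$-systems, showing the shift contributes only $o(q^d)$ error at each stage of the iteration. Once this bookkeeping is done, the double-limit formula of Theorem \ref{generalmult} yields both the existence of the limits and their equality.
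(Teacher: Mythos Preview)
Your overall architecture (reduce to a complete domain, feed suitable families into Theorem \ref{generalmult} so that $\cal{F}$ gives part $(1)$ and the inner limit $\cal{G}$ gives $e_{HK}(I_{q'})$) matches the paper exactly, and your verification of the $p(c)$-type condition via $m^{c\mu qq'}\subset (m^{cq'})^{[q]}\subset I_{q'}^{[q]}$ is correct. The divergence is in how the ``weakly'' is handled.

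The paper does \emph{not} rerun the OK-valuation machinery or develop a theory of translated $p$-systems. Instead it removes the obstruction by a one-line algebraic trick: after reducing to a complete domain (so $c\in R^{\circ}$ is a nonzerodivisor), it replaces the outer families by $I'_q=cI_q$, $J'_q=cR$ and the inner families by $I'_{q',q}=c^qI_{q'}^{[q]}$, $J'_{q',q}=c^qR$. These are \emph{genuine} $p$-families: $(cI_q)^{[p]}=c^pI_q^{[p]}\subset c(cI_q^{[p]})\subset cI_{pq}$, and $(c^qI_{q'}^{[q]})^{[p]}=c^{pq}I_{q'}^{[pq]}$; the containments $c^qI_{q'}^{[q]}\subset cI_{qq'}$ follow by iterating the weakly-$p$ condition. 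Since multiplication by $c^q$ is an $R$-module isomorphism $R\xrightarrow{\sim} c^qR$, one has $\ell_R(cR/cI_q)=\ell_R(R/I_q)$ and $\ell_R(c^qR/c^qI_{q'}^{[q]})=\ell_R(R/I_{q'}^{[q]})$, so nothing is lost. Now Theorem \ref{generalmult} applies verbatim. The price is that the $p(c)$ condition becomes $cI_q\cap m^{kq}=cR\cap m^{kq}$ (and its $q',q$ analogue), which needs Artin--Rees for $cR\subset R$ together with BBL and a pigeonhole step; this is what the paper spends its effort on.

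By contrast, your route keeps $J_q=R$, which makes the $p(c)$ check trivial but forces you to cope with $\{I_q\}$ not being a $p$-system on the valuation side. Your claim that the accumulated shift $\frac{q-1}{p-1}\nu(c)$ ``contributes zero after the $1/q^d$ normalization'' is not quite right as stated: after dividing by $q$ the shift is bounded but \emph{not} $o(1)$, so it can genuinely move the $p$-body and change truncated volumes. One can likely repair this (e.g.\ by comparing with the honest $p$-system $\nu(c^{(q-1)/(p-1)}I_q)$, which is your weak system rigidly translated), but carrying that out amounts to reproving a shifted version of Theorem \ref{cone1} and Theorem \ref{D1thm3.18}. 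The paper's multiplication-by-$c$ trick sidesteps all of this and lets Theorem \ref{generalmult} be used as a black box.
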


\begin{proof}
We first reduce to the case where $R$ is a complete local domain. There are two important considerations to address. Firstly, we must ensure that the operations performed do not alter the property of the family, i.e., it remains a BBL weakly $p$-family. Secondly, we must verify that the asymptotic colength remains unchanged. These points can be observed from the same steps described in Lemmas 10.1, 10.2, and 10.3 of \cite{DC}. 
Since $\{I_q\}$ is a weakly $p$-family, we can choose a $c \neq 0$ such that $c(I_q^{[p]})\subset I_{pq}$ for all $q$. Note that $ \ell (R/I_q)= \ell (cR/cI_q)$. We define $I_{\bullet}^{'}= \{cI_{q}\}$ and $J_{\bullet}^{'}=\{cR\}$ for all $q$. It is easy to see that $I_{\bullet}^{'}$ and $J_{\bullet}^{'}$ are $p$-families. We claim that there exist a positive $k$ such that $ cI_{q} \cap \mathfrak{m}^{kq} = cR \cap \mathfrak{m}^{kq}$ for all $q$. One direction is clear. Now for the other part, using the Artin-Rees lemma for large enough $k$, we have $cR \cap \mathfrak{m}^{kq} = \mathfrak{m}^{c_0}(cR \cap \mathfrak{m}^{kq-c_0}) \subset c\mathfrak{m}^{c_0} \subset cI_{q}$ for all $q$, and $c_0$ is the constant which satisfies the property of BBL family for $\{I_q\}$. Now using \cite[Theorem 5.12]{HJ} we get $(1)$.

To prove (2), we will employ Theorem \ref{generalmult}. We define $I_{\bullet}^{'}$ and $J_{\bullet}^{'}$ as before. Now, let $I_{q’, \bullet}^{'} = \{c^qI_{q’}^{[q]}\}$ and $J_{q’, \bullet}^{'} = \{c^qR\}$. It is evident that $J_{q’,1}^{'} = J_{q’}^{'}$, $J_{q’,q}^{'}\subset J_{qq’}^{'}$, $I_{q’,1}^{'} = I_{q’}^{'}$, and $I_{q’,q}^{'}\subset I_{qq’}^{'}$ for all $q$. We will now demonstrate that there exists a positive number $k_0$ such that
\begin{equation*}
     c^qI_{q’}^{[q]} \cap  \mathfrak{m}^{k_0qq'} = c^qR \cap  \mathfrak{m}^{k_0qq'}
\end{equation*}for all $q,q’$. One containment is always true, and for the other, we set $k_0 = c_0 + r + k + 1$, where $c_0$ and $k$ are the constants used in the proof of $(1)$, and $r$ is the minimum number of generators of the maximal ideal $\mathfrak{m}$. Now,  $c^qR \cap \mathfrak{m}^{k_0qq'} = \mathfrak{m}^{k_0qq^{'}-k}(c^qR \cap \mathfrak{m}^{k}) \subset c^q \mathfrak{m}^{k_0qq^{'}-k}  $ and note that $k_0qq^{'}-k \geqslant (c_0q^{'})(r+1)q $. This implies  $c^qR \cap \mathfrak{m}^{k_0qq^{'}} \subset c^q(\mathfrak{m}^{c_0q^{'}})^{(r+1)q} \subset c^q(\mathfrak{m}^{c_0q^{'}})^{[q]} \subset c^qI_{q^{'}}^{[q]}$. Second containment in the above chain follows from pigeon-hole principle. Let's say $\{x_1,...,x_r\}$ are the generators of $\mathfrak{m}$. Now any generator of 
$(\mathfrak{m}^{c_0q^{'}})^{(r+1)q}$ will be of the form $x_1^{k_1}...x_r^{k_r}$ such that $k_1+...+k_r = c_0(r+1)qq^{'}$. This implies that there exists at least one $i$ such that $k_i > c_0qq^{'}$, and $x_i^{c_0qq^{'}}$ is one of the generators of $(\mathfrak{m}^{c_0q^{'}})^{[q]}$. Therefore, we get the second containment and the desired equality. Since we operate within a complete local domain, and all ideals are $\mathfrak{m}$-primary, the last condition of Theorem \ref{generalmult} is automatically fulfilled.  
Following the notation used in Theorem \ref{generalmult} we define $\cal{F}(I_{\bullet}^{'},J_{\bullet}^{'}):= \lim_{q\to\infty}\frac{\ell_R(J_q^{'}/I_q^{'})}{q^d} = \ell (cR/cI_q) $ and $ \cal{G}(J_{q',\bullet}^{'}, I_{q',\bullet}^{'}):= \displaystyle \lim_{q\to\infty}
    \dfrac{\ell_R(J_{q',q}/I_{q',q})}{q^d} = \displaystyle \lim_{q \to \infty} \dfrac{ \ell \left( c^qR/c^qI_{q^{'}}^{[q]} \right)}{q^d} = \displaystyle \lim_{q \to \infty} \dfrac{\ell \left( R/I_{q^{'}}^{[q]}\right)}{q^d} = e_{HK}(I_{q^{'}})$. So $(2)$ follows using the conclusion of Theorem \ref{generalmult}.
\end{proof}

\subsection*{Acknowledgments}

The second author thanks the Center for Mathematical Sciences and Applications for their support during the writing of this manuscript.

\end{document}